\def\R{\mathbf{R}}
\newtheorem{conj}{Conjecture}
\newtheorem{theorem}{Theorem}
\newtheorem{lemma}{Lemma}
\begin{document}

\title[Homogeneous Central Configurations]{Planar $N$-body central configurations with a homogeneous potential}
\author{Marshall Hampton}

\begin{abstract}
Central configurations give rise to self-similar solutions to the Newtonian $N$-body problem, and play important roles in understanding its complicated dynamics.  Even the simple question of whether or not there are finitely many planar central configurations for $N$ positive masses remains unsolved in most cases.  Considering central configurations as critical points of a function $f$, we explicity compute the eigenvalues of the Hessian of $f$ for all $N$ for the point vortex potential for the regular polygon with equal masses.  For homogeneous potentials including the Newtonian case we compute bounds on the eigenvalues for the regular polygon with equal masses, and give estimates on where bifurcations occur.  These eigenvalue computations imply results on the Morse indices of $f$ for the regular polygon.   Explicit formulae for the eigenvalues of the Hessian are also given for all central configurations of the equal mass 4-body problem with a homogeneous potential.  Classic results on collinear central configurations are also generalized to the homogeneous potential case. Numerical results, conjectures, and suggestions for future work in the context of a homogeneous potential are given.
\end{abstract}


\maketitle

\section{Introduction}

The classical dynamics of $N$ point particles with masses $m_i$ interacting via a central potential $U$ are given by:

$$m_i \ddot{q}_{i;j} = \frac{\partial U}{\partial q_{i;j}},  \ \ i \in \{0,\ldots N-1\}, \ \ j \in \{1,\ldots,d\}$$

where $q_i \in \R^{d}$ is the position of particle $i$ with components $q_{i;j}$, and 

$$ U = \sum_{i<k} m_i m_k/ r_{i,k}^{A-2}$$
is the potential with a real parameter $A > 2$, and $r_{i,k}$ is the distance between $q_i$ and $q_k$.   The case of Newtonian gravity is $A=3$ \cite{Newt}, and provides the primary motivation for studying this more general problem.  We can extend this potential to the case $A=2$ by using the logarithmic potential

$$U = \sum_{i<k} m_i m_k \log(r_{i,k})$$
which arises in a simplified model of fluid vortices \cite{HelmholtzV,kirchhoff_vorlesungen_1883,aref1992grobli}.  In the vortex model case the parameters $m_i$ represent the strength of a vortex rotation, and can be any real value.  However our main interest is the Newtonian case, for which we usually assume non-negative mass parameters.

In this article we focus our attention on configurations with the special property that each particle is accelerated towards the center of mass of the system at a rate uniformly proportional to its distance from the center of mass, i.e.

$$m_i \ddot{q}_{i;j} = \lambda (q_{i;j} - q_{C,j})$$
with $q_C = \frac{1}{M} \sum m_i q_i$ the center of mass, and $M = \sum m_i$ is the total mass.   Such configurations are called {\em central configurations} (as well as {\em permanent} or {\em stationary configurations} in some older literature).  In the planar case they also account for the relative equilibria, which are equilibria in a uniformly rotating reference frame.  Central configurations are important in the $N$-body problem for a number of reasons, including the study of multiple body collisions \cite{mcgehee1974triple, hulkower78, devaney1980triple,moeckel1981orbits} and the topology of the phase space for a fixed energy \cite{cabral1973integral, Easton75, Albouy93I, mccord1998integral}.  Understanding of the dynamics near central configurations was critical in the proof of chaotic behavior in the three-body problem \cite{moeckel1989chaotic}.

For more background on central configurations we highly recommend the recent summary by Moeckel \cite{LMS}, as well as earlier surveys \cite{saari80,moeckel_central_1990, elbialy1990collision, lacomba2003singularities, saari05}.

A longstanding open problem about central configurations is whether or not there are finitely many equivalence classes of central configurations for a particular choice of $N$ positive masses (usually restricted to the Newtonian case of $A=3$).  The most famous version of this problem further restricts the configurations to $\R^2$, and was highlighted by Stephen Smale as the sixth of his `Mathematical problems for the next century' \cite{smale98}. Smale himself considered the problem \cite{smale_topology_1970, smale_topology_1970-1}, and introduced a topological viewpoint that we will consider in the next section.  This problem was also formulated earlier by Wintner \cite{wintner41} and Chazy \cite{chazy18}, and highlighted more recently in \cite{albouy2012some}.  We follow the usual convention of considering two planar configurations equivalent if there is a direct isometry between them (i.e. an orientation-preserving rigid motion).

The difficulty of the finiteness problem was underscored by the discovery of a counter-example in the 5-body problem if a negative mass is allowed \cite{roberts_continuum_1999}.  This example has been extended to more general settings \cite{little13}.  The existence of positive dimensional sets of central configurations for some negative mass parameters makes many approaches using algebraic geometry challenging, since methods based on complex varieties are incapable of ruling these out.  Indeed, although there have been numerous successful applications of methods from classical and real algebraic geometry and tropical geometry to the finiteness problem \cite{moeckel_generic_2001, leandro_finiteness_2003, hampton_finiteness_2005, hampton_finiteness_2009, hampton_finiteness_2010, hampton_finiteness_2011, albouykaloshin}, we believe that solving the finiteness problem in general requires additional tools. 

The primary hypothesis of this manuscript is that studying the central configurations for a homogeneous potential (more general than the Newtonian) will more naturally develop mathematical tools that will advance the Newtonian case, analogously to the use of tools from complex analysis in solving real-analytic problems (e.g. contour integrals).  In particular we believe it would be valuable to develop a framework for central configurations in the limiting case of $A \rightarrow \infty$.

In what follows we consider central configurations as critical points of the function $$f = \frac{M I}{2} + \frac{U}{A-2},$$ where $I$ is the moment of inertia 
$$I = \sum_{i=1}^N m_i r_i^2$$
where  $r_i = |q_i|$ is the distance from the $i$th point to the origin.  If the center of mass is at the origin, then 
$$I = \frac{1}{M}\sum_{i<j} m_i m_j r_{i,j}^2.$$
Because the potential function $U$ is invariant under translation, all critical points of $f$ will have their center of mass at the origin.  In contrast to some other formulations of central configurations, our $f$ is homogeneous in the mass parameters but not in the distance variables.  We have in effect set a preferred scale from the beginning in order to have an unconstrained problem.  We find this approach simplest, but there are many other formulations of the problem \cite{laura1905sulle, albouy_paper_2003, LMS, ferrario2017central}.

The idea of studying a more general potential, even if we are mainly interested in the Newtonian case, is an old one \cite{synge1932apsides, lacomba1988origin, albouy_paper_2003, albouy_euler_2007,cors2014uniqueness}.  We would like to especially highlight the study of the behavior of central configurations for large values of $A$, which has not recieved much attention in the literature before.  

We briefly review the relevant topology for using Morse theory in the $N$-body problem. 

The configuration space we will use is $\mathcal{C}_N = (\mathbb{R}^{2N} \backslash \Delta)/S^1$, where $\Delta$ is the subset of collisions ($q_i = q_j$ for $i \neq j$) and the quotient is taken with respect to proper rotations around the origin treating $\mathbb{R}^{2N}$ as $(\mathbb{R}^{2})^{N}$.  The  function $f$ is well-defined on this quotient since both $I$ and $U$ are rotationally invariant.  

The simplest versions of Morse theory concern the behavior of a smooth function on a compact manifold, with the extra condition that the critical points of the function are nondegenerate (i.e. the Hessian is nondegenerate).  Although our function $f$ is not defined on a compact manifold (because of the removal of the set $\Delta$), this can be remedied without too much effort because the gradient of $f$ will always become outward pointing close to $\Delta$.  This was made precise by Shub \cite{shub1971appendix}.  Assuming $f$ is nondegenerate, its level sets change in topology at each critical value.  If the topology of the manifold is non-trivial, such changes in the level sets are inevitable.  The index of a critical point is the dimension of the largest subspace on which the Hessian of $f$ is negative definite.  Let the number of critical points with index $j$ be $n_j$, and encode this information in the Morse polynomial:

$$M(t) = \sum_{j=0}^{dim(\mathcal{C}_N)} n_j t^j$$
where $t$ is an auxiliary variable.  The Poincar{\'e} polynomial for a manifold is defined as $P(t) = \sum \beta_j t^j$, where $\beta_j$ is the $j$th Betti number.  Morse theory relates these polynomials by

$$M(t) = P(t) + (1+t)R(t),$$
where $R(t)$ is a polynomial with non-negative integer coefficients.  This not only puts a lower bound on the number of critical points, but also provides constraints on the possible $n_j$.

For more background on Morse theory we recommend \cite{milnor2016morse, bott1988morse}.

The Poincar{\'e} polynomial for the manifold $\mathcal{C}_N$ is $P_N(t) = \prod_{j=1}^{N-1} (1 + j t)$ \cite{arnold1969cohomology}.  The corresponding result in the spatial case was first computed in \cite{pacella}; in dimension $d$, $P_{N,d}(t) = \prod_{j=1}^{N-1} (1 + j t^{d-1})$ \cite{LMS}.

It is not completely clear how much the index of a critical point influences the more general dynamical behavior of orbits near the central configuration, although there are some results connecting these \cite{SunHu09, Baru14}.  Not many cases of exact calculations of linear stability are available.  The general problem was made precise by Andoyer \cite{andoyer1906main}.  Particular cases have been studied for three bodies \cite{gascheau1843,routh1875,roberts02,santoprete06}, four bodies \cite{rayl39,brumberg_permanent_1957, roberts13, sweatman2014orbits}, restricted cases (i.e. with one or more infinitesimal masses) \cite{pedersen1952stabilitatsuntersuchungen}, and polygonal and $N+1$ ring systems \cite{hall88,scheeres1991linear,moeckel_dom94, roberts_linear_2000, CabralSchmidt,barry10}; more could be done, especially numerically, in our setting of a variable exponent potential.  A particularly interesting example is the lower bound for instability of equal mass relative equilbria found by Roberts \cite{roberts_spectral_1999} in the Newtonian case.

A recent result of Montaldi \cite{Montaldi2015} uses only the existence of a minimum of $f$ to derive the existence of a large family of symmetric central configurations; in many settings this result could be strengthened using Morse theory (assuming the function $f$ is non-degenerate).  The existing upper and lower bounds for central configurations are far from sharp, despite some substantial effort \cite{llibre1990number, mccord1996planar, merkel2008morse, albouy2015open}.

\section{The regular polygon in the N-body problem} \label{regpolysec}

We can prove some properties of the Morse index of central configurations for the regular polygon in the $N$-body problem for varying $A$, and speculate on some others.  Quite a few results for regular polygon central configurations are known for the Newtonian ($A=3$) and vortex $(A=2)$ cases \cite{Longley1907,PerkoWalter,meyer1988bifurcations,celli2011polygonal}.

These configurations are well suited to polar coordinates, so we will express the position of the $i$th particle as $$q_i = (r_i \cos (\theta_i), r_i \sin(\theta_i))$$
For the regular polygon centered at the origin, all of the radii are equal ($r_i = r$ for some $r$) and $\theta_i = \frac{2 \pi i}{N}$ where we will index the particles starting at $i=0$.  In what follows we denote the evaluation of a function at the equal mass regular polygon by a superscript circle, e.g. $f^{\circ}$.  For the interparticles distances we define $p_{i,j} = r u_{i,j} = (r_{i,j})^{\circ}$, where
$$u_{i,j} = \sqrt{2 - 2 \cos \left [ \frac{2 \pi(j-i)}{N} \right ]} = 2 \sin(\frac{\pi |j-i|}{N})$$
are the distances between points $i$ and $j$ on the unit radius regular polygon.

As before we consider central configurations as critical points of the function $f = \frac{M I}{2} + \frac{U}{A-2}$.  To calculate derivatives of $f$ we will need the partial derivatives of the interparticle distances with respect to $r_i$ and $\theta_i$:

$$\frac{\partial r_{i,j}}{\partial r_i} = \frac{r_i - r_j \cos(\theta_j - \theta_i)}{r_{i,j}}$$

$$\frac{\partial r_{i,j}}{\partial \theta_i} = \frac{- r_i r_j \sin(\theta_j - \theta_i)}{r_{i,j}}.$$

Evaluated on the regular polygon, we have

$$\left ( \frac{\partial r_{i,j}}{\partial r_i} \right )^{\circ}  = \frac{1 -  \cos(\theta_j - \theta_i)}{u_{i,j}} = u_{i,j}/2$$

$$\left ( \frac{\partial r_{i,j}}{\partial \theta_i}\right )^{\circ}  = -r \frac{\sin(\theta_j - \theta_i)}{u_{i,j}} = -r\cos\left(\frac{\pi |j-i|}{N}\right).$$

The gradient of $f$ with respect to $r_i$ and $\theta_i$ has components

$$\frac{\partial f}{\partial r_i} = m_i M r_i - m_i \sum_{j \neq i} m_j r_{i,j}^{-A} (r_i - r_j \cos(\theta_j - \theta_i))$$

and

$$\frac{\partial f}{\partial \theta_i} = m_i \sum_{j \neq i} m_j r_{i,j}^{-A} r_i r_j \sin(\theta_j - \theta_i).$$

Evaluated at the equal mass regular polygon these are 

$$\left (\frac{\partial f}{\partial r_i}\right)^{\circ} = N r - r \sum_{j \neq i} p_{i,j}^{-A} (1 -  \cos(\theta_j - \theta_i))$$
$$\left (\frac{\partial f}{\partial \theta_i}\right )^{\circ} = r^2 \sum_{j \neq i} p_{i,j}^{-A} \sin(\theta_j - \theta_i) = 0.$$
where the second quantity is zero because $\sin(t)$ is odd and $p_{i,j}$ is even.

To be a critical point of $f$, $(\frac{\partial f}{\partial r_0})^{\circ} = 0$, which can be solved for the radius:

$$r = \left ( \frac{\sum_{j=1}^{N-1} u_{0,j}^{2-A}}{2 N} \right )^{1/A}$$

As $A$ increases, this radius increases to the limit value $r_{\infty}(N) = (2 \sin(\pi/N))^{-1}$, for which $p_{i,i+1} = 1$.  For $A=2$, the radius is equal to $\sqrt{\frac{N-1}{2 N}}$.

Now to compute the Morse index of $f$ for regular polygons we need the components of the Hessian of $f$.  In expressions involving indices $i$ and $j$, it is assumed that $i \neq j$.  In the final form shown for each expression we use the radius $r$ and unit polygon distances $u_{i,j}$ as much as possible.

$$\frac{\partial^2 f} {\partial r_i \partial r_j} = m_i m_j r_{i,j}^{-A-2} \left ( A(r_i - r_j \cos(\theta_j - \theta_i)) (r_j - r_i \cos(\theta_j - \theta_i)) + r_{i,j}^2 \cos(\theta_j-\theta_i)\right )$$

$$R_{i,j} := \left (\frac{\partial^2 f} {\partial r_i \partial r_j} \right )^{\circ} = p_{i,j}^{-A} \left [ \frac{A}{2}(1 - \cos\left ( \frac{2 \pi (j-i)}{N} \right )) + \cos\left ( \frac{2 \pi (j-i)}{N} \right ) \right ]$$
which simplifies to
$$R_{i,j}  = r^{-A} u_{i,j}^{-A} \left ( u_{i,j}^2 \frac{A-2}{4} + 1 \right )$$

$$\frac{\partial^2 f} {\partial r_i^2} = m_i (M - \sum_{j \neq i} m_j r_{i,j}^{-A-2} \left [ r_{i,j}^2 - A(r_i - r_j \cos(\theta_j - \theta_i))^2\right ] )$$

$$R_{i,i} := \left (\frac{\partial^2 f} {\partial r_i^2} \right )^{\circ} = N - \sum_{j \neq i} p_{i,j}^{-A}\left (1 - \frac{A p_{i,j}^2}{4 r^2}\right )$$
or
$$R_{i,i} = N \left ( 1+\frac{A}{2} - 2\frac{\sum_{j \neq i} u_{i,j}^{-A}}{\sum_{j \neq i} u_{i,j}^{-A+2}} \right ) = r^{-A} \sum_{j\neq i} u_{i,j}^{-A} \left ( u_{i,j}^2 (\frac{A}{4}+\frac{1}{2}) - 1 \right )$$
(for these identities we use the explicit formula for the radius $r$, e.g. to rewrite $N = r^{-A} \sum_{j=1}^{N-1} u_{0,j}^{2-A}/2$).

$$\frac{\partial^2 f} {\partial r_i \partial \theta_j} = -m_i m_j r_{i,j}^{-A-2} r_j \sin(\theta_j - \theta_i)\left [r_{i,j}^2 - A r_i (r_i - r_j \cos(\theta_j - \theta_i)) \right ] $$

$$W_{i,j} := \left (\frac{\partial^2 f} {\partial r_i \partial \theta_j} \right )^{\circ} = p_{i,j}^{-A}r \sin\left ( \frac{2 \pi (j-i)}{N} \right ) (\frac{A}{2} - 1) $$
or equivalently
$$W_{i,j} = r^{-A+1}\left (\frac{A}{2}-1 \right) u_{i,j}^{-A+1} \sqrt{1 - \frac{u_{i,j}^2}{4}}$$

$$\frac{\partial^2 f} {\partial r_i \partial \theta_i} = m_i \sum_{j \neq i} m_j r_{i,j}^{-A-2}r_j \sin(\theta_j - \theta_i) \left [ -A r_i(r_i - r_j \cos(\theta_j - \theta_i)) + r_{i,j}^2 \right ]$$

$$W_{i,i} := \left (\frac{\partial^2 f} {\partial r_i \partial \theta_i} \right )^{\circ} = \sum_{j \neq i} p_{i,j}^{-A} r \sin \left (\frac{2 \pi (j-i)}{N} \right) (1-A/2) = 0$$

$$\frac{\partial^2 f} {\partial \theta_i \partial \theta_j} = m_i m_j r_{i,j}^{-A-2} r_i r_j \left [-A r_i r_j \sin^2(\theta_j - \theta_i) + r_{i,j}^2 \cos(\theta_j - \theta_i)\right ]$$

$$T_{i,j} := \left (\frac{\partial^2 f} {\partial \theta_j \partial \theta_i} \right )^{\circ} =r^2 p_{i,j}^{-A-2} \left [-A r^2 \sin^2\left ( \frac{2 \pi (j-i)}{N} \right ) + p_{i,j}^2 \cos\left ( \frac{2 \pi (j-i)}{N} \right ) \right ] $$

$$= r^{-A+2} u_{i,j}^{-A} \left [-A  \left (1 - \frac{u_{i,j}^2}{4} \right ) +  \left (1 - \frac{u_{i,j}^2}{2} \right ) \right ]$$

$$\frac{\partial^2 f} {\partial \theta_i^2} = m_i \sum_{j \neq i} m_j r_i r_j r_{i,j}^{-A-2} \left [ A r_i r_j \sin^2(\theta_j - \theta_i) - r_{i,j}^2 \cos(\theta_j - \theta_i) \right ]$$

$$T_{i,i} := \left (\frac{\partial^2 f} {\partial \theta_i^2} \right )^{\circ} = \sum_{j \neq i} r^2 p_{i,j}^{-A-2} \left [ A r^2 \sin^2( \frac{2 \pi (j-i)}{N}) - p_{i,j}^2 \cos( \frac{2 \pi (j-i)}{N}) \right ] = - \sum_{j \neq i} T_{i,j}$$

In terms of these newly defined quantities, with respect to the variables $(r_0, r_1, \ldots r_{N-1}, $ $\theta_0, \theta_1, \ldots \theta_{N-1})$, the Hessian is

$$D^2f^{\circ} = \left ( \begin{array}{c|c}
R & W \\ \hline
-W & T \\
\end{array} \right )$$

In a similar way to that in \cite{hampton_splendid}, we can exploit the circulant structure of the Hessian submatrices to compute its eigenvalues.  The submatrices $R$ and $T$ are circulant and symmetric, while $W$ is circulant and anti-symmetric.  Let $C$ be the $N$ by $N$ matrix with $C_{i,j} = e^{2 \mathbb{I} i j /n}$, where $\mathbb{I}= \sqrt{-1}$, and $i,j \in \{0, \ldots, N-1\}$.  This matrix $C$ orthogonally diagonalizes any circulant matrix of the same dimension.  Thus we have

$$H = \left ( \begin{array}{c|c}
C^{-1} & 0 \\ \hline
0 & C^{-1}  \\
\end{array} \right ) \left ( \begin{array}{c|c}
R & W \\ \hline
-W & T \\
\end{array} \right ) \left ( \begin{array}{c|c}
C & 0 \\ \hline
0 & C \\
\end{array} \right ) = \left ( \begin{array}{c|c}
P & S \\ \hline
-S & Q \\
\end{array} \right )$$
in which the subblocks of $H$ are all diagonal, $P$ and $Q$ are real, and $S$ is purely imaginary.  We can express the entries of $P$,$Q$, and $S$ in terms of the first rows of $R$, $T$, and $W$ respectively:

$$P_{i,i} = \sum_{j=0}^{N-1} R_{0,j} C_{j, i} $$

$$ =R_{0,0} +  \sum_{j = 1}^{\left \lfloor{(N-1)/2}\right \rfloor} 2 \cos(\frac{2 \pi i j}{N})R_{0,j} + \left \{ \begin{array}{c}  0 \ \text{for $N$ \ odd } \\ (-1)^i (2r)^{-A} (A-1) \ \text{for $N$ \ even } \end{array} \right .$$
or more explicitly
\begin{align} \label{Pii} P_{i,i} = & r^{-A} \sum_{j = 1}^{\left \lfloor{(N-1)/2}\right \rfloor} u_{0,j}^{-A} \left ( u_{0,j}^2 \left [ \frac{A}{2} + 1 + (\frac{A}{2} - 1)\cos(\frac{2 \pi i j}{N}) \right]  -2 + 2 \cos(\frac{2 \pi i j}{N}) \right )  \\ 
& + \left \{ \begin{array}{c}  0 \ \text{for $N$ \ odd } \\ (2r)^{-A}((-1)^i(A-1) + (A+1)) \ \text{for $N$ \ even } \end{array} \right . \nonumber
\end{align}

$$Q_{i,i} = \sum_{j=0}^{N-1} T_{0,j} C_{j,i}  = T_{0,0} + \sum_{j = 1}^{\left \lfloor{(N-1)/2}\right \rfloor} 2 \cos(\frac{2 \pi i j}{N}) T_{0,j} + \left \{ \begin{array}{c}  0 \ \text{for $N$ \ odd } \\ (-1)^{i}r^2(2r)^{-A}\ \text{for $N$ \ even } \end{array} \right .$$

which can be written as

\begin{equation} \label{Qii} Q_{i,i} = 2 r^{-A+2} \sum_{j=1}^{\left \lfloor{(N-1)/2}\right \rfloor} u_{0,j}^{-A} \left ( 1 - \cos \left ( \frac{2 \pi i j}{N} \right ) \right ) \left ( \frac{A-2}{2} \left (1 + \cos \left (\frac{2 \pi j}{N} \right ) \right ) + 1 \right ) .
\end{equation}
The latter form of $Q_{i,i}$ makes it clear that $Q_{i,i} > 0$ for $A\ge 2$ (each term of the sum is non-negative).  

Finally

$$S_{i,i} = \sum_{j=0}^{N-1} W_{0,j} C_{j,i} = \mathbb{I} \sum_{j = 1}^{\left \lfloor{(N-1)/2}\right \rfloor} 2 \sin(\frac{2 \pi i j}{N}) W_{0,j}$$
so
\begin{equation} \label{Sii}
S_{i,i}  =  2 r^{-A+1} \frac{A-2}{2} \ \mathbb{I} \ \sum_{j = 1}^{\left \lfloor{(N-1)/2}\right \rfloor} \sin(\frac{2 \pi i j}{N}) \sin(\frac{2 \pi j}{N}) u_{0,j}^{-A}
\end{equation}
where $\mathbb{I} = \sqrt{-1}$.

Now we can compute the eigenvalues of the Hessian of $f$ in pairs from the two by two matrices 
\begin{equation} \label{Eidef}
E_i =  \left ( \begin{array}{c|c}
P_{i,i} & S_{i,i} \\ \hline
-S_{i,i} & Q_{i,i} \\
\end{array} \right )
\end{equation}
We will denote the two eigenvalues of this block by
$$\lambda_{(N, i, \pm)} = \frac{1}{2} \left ( P_{i,i} + Q_{i,i} \pm \sqrt{(P_{i,i} - Q_{i,i})^2 - 4S_{i,i}^2} \right )$$

\subsection{The regular polygon in the vortex case}

Now we consider the regular polygon configurations, starting with the extreme case of $A=2$.

\begin{theorem}
For $A=2$, $Q_{i,i}|_{A=2} = \left \lfloor \frac{i}{2}N - \frac{i^2}{2} \right \rfloor $.
\end{theorem}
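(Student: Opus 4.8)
The plan is to specialize the closed form \eqref{Qii} for $Q_{i,i}$ to $A=2$ and to recognize the resulting expression as a classical squared-Dirichlet (Fej\'er-kernel) sum. Setting $A=2$ kills the prefactor $r^{-A+2}=1$ and collapses the bracket $\tfrac{A-2}{2}(1+\cos(2\pi j/N))+1$ to $1$, while $u_{0,j}^{-A}=u_{0,j}^{-2}$. Using $u_{0,j}=2\sin(\pi j/N)$, so that $u_{0,j}^2=4\sin^2(\pi j/N)=2(1-\cos(2\pi j/N))$, the formula becomes
$$Q_{i,i}|_{A=2} = \sum_{j=1}^{\lfloor (N-1)/2\rfloor} \frac{1-\cos(2\pi i j/N)}{1-\cos(2\pi j/N)} = \sum_{j=1}^{\lfloor (N-1)/2\rfloor} \frac{\sin^2(\pi i j/N)}{\sin^2(\pi j/N)}.$$
Thus the theorem reduces to evaluating a half-range sum of the ratios $\sin^2(\pi i j/N)/\sin^2(\pi j/N)$, which I would treat for $0 \le i \le N-1$ (the relevant Fourier index range, with the $i=0$ case giving $0$ on both sides).

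Next I would establish the full-range identity $\sum_{j=1}^{N-1}\sin^2(\pi i j/N)/\sin^2(\pi j/N)=i(N-i)$. This follows from the Fej\'er expansion $\sin^2(i\theta)/\sin^2\theta = i + 2\sum_{k=1}^{i-1}(i-k)\cos(2k\theta)$ applied with $\theta=\pi j/N$: summing over $j=1,\dots,N-1$, the constant term contributes $i(N-1)$, and each cosine sum satisfies $\sum_{j=1}^{N-1}\cos(2\pi kj/N)=-1$ (valid since $1\le k\le i-1<N$), so the total is $i(N-1)-2\sum_{k=1}^{i-1}(i-k)=i(N-1)-i(i-1)=i(N-i)$.

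Finally I would pass from the full range to the half range $1\le j\le\lfloor (N-1)/2\rfloor$ using the symmetry $j\mapsto N-j$, under which both numerator and denominator are invariant. For $N$ odd the $N-1$ terms split into two equal halves, so the half-range sum equals $\tfrac12 i(N-i)$; since $i(N-i)$ is even whenever $N$ is odd, this is an integer and the floor is inert. For $N$ even the only term not paired is the middle index $j=N/2$, whose value is $\sin^2(\pi i/2)\in\{0,1\}$, so the half-range sum equals $\tfrac12\big(i(N-i)-\sin^2(\pi i/2)\big)$; this is $\tfrac12 i(N-i)$ when $i$ is even and $\tfrac12(i(N-i)-1)$ when $i$ is odd. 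In the latter case $i(N-i)$ is odd, so $\tfrac12(i(N-i)-1)=\lfloor i(N-i)/2\rfloor$. Collecting the cases yields $Q_{i,i}|_{A=2}=\lfloor \tfrac{i}{2}N-\tfrac{i^2}{2}\rfloor$ in every case.

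I expect the only real subtlety to be the bookkeeping that shows the floor is exactly the even-$N$, odd-$i$ correction; everything else is either the Fej\'er identity or a parity argument. The key realization is that the discrepancy between the half-range sum and the clean value $i(N-i)/2$ is precisely the omitted middle term $\sin^2(\pi i/2)$ of the even-$N$ symmetrization, which equals $1$ exactly when $i(N-i)$ is odd, so that the floor function uniformly absorbs it.
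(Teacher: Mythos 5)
Your proof is correct, but it takes a genuinely different route from the paper's. Both arguments start identically, specializing equation \eqref{Qii} at $A=2$ to $Q_{i,i}|_{A=2}=\sum_{j=1}^{\lfloor (N-1)/2\rfloor}\bigl(1-\cos(2\pi ij/N)\bigr)/\bigl(1-\cos(2\pi j/N)\bigr)$, but they diverge immediately after. The paper proceeds by induction on $i$: it computes the base cases $i\in\{0,1,2,3\}$ by factoring the numerator against the denominator, and then shows via Chebyshev summation formulae that the second difference $D_i=Q_{i+2,i+2}-2Q_{i,i}+Q_{i-2,i-2}$ is identically $-4$, which propagates the floor formula. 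You instead evaluate the sum in closed form: the Fej\'er expansion $\sin^2(i\theta)/\sin^2\theta=i+2\sum_{k=1}^{i-1}(i-k)\cos(2k\theta)$ together with $\sum_{j=1}^{N-1}\cos(2\pi kj/N)=-1$ (valid since $1\le k\le i-1<N$ in the relevant range $0\le i\le N-1$) gives the full-range identity $\sum_{j=1}^{N-1}\sin^2(\pi ij/N)/\sin^2(\pi j/N)=i(N-i)$, and the $j\mapsto N-j$ symmetry halves this, with the floor absorbing exactly the unpaired middle term $\sin^2(\pi i/2)$ when $N$ is even. Your parity bookkeeping is right: $i(N-i)$ is even whenever $N$ is odd, or $N$ and $i$ are both even, and odd precisely when $N$ is even and $i$ is odd, which is when the correction of $1$ appears. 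Your approach buys a structural explanation that the paper's induction obscures --- the floor is not an artifact of a recursion but is literally the omitted $j=N/2$ term of the symmetrized sum --- and it avoids both the four base cases and the Chebyshev machinery; it also produces the manifestly symmetric intermediate value $i(N-i)$, which makes the eigenvalue symmetry $\lambda_{(N,i,\pm)}=\lambda_{(N,N-i,\pm)}$ noted later in the paper visible at a glance. The paper's route, for its part, keeps the Chebyshev summation formulae on stage, which it reuses in spirit in the subsequent $A>2$ estimates.
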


\begin{proof}
We will prove this by induction on $i$.  First note that we can specialize equation (\ref{Qii}) for $A=2$ to

\begin{equation} \label{QiiA2}
Q_{i,i}|_{A=2} = \sum_{j=1}^{\lfloor \frac{N-1}{2} \rfloor} \frac{1 - \cos(\frac{2 \pi i j}{N})}{1 - \cos(\frac{2 \pi  j}{N})}
\end{equation}

The base cases we need are for $i \in \{0,1,2,3\}$.  The first two: 
$$Q_{0,0}|_{A=2} = 0$$
$$Q_{1,1}|_{A=2} = \lfloor \frac{N-1}{2} \rfloor$$
follow directly from (\ref{QiiA2}).  For $i=2$ and $i=3$ we rewrite the numerator in terms of $\cos(\frac{2 \pi  j}{N})$, and in each case the denominator appears as a factor we can cancel.  

$$Q_{2,2}|_{A=2} = \sum_{j=1}^{\lfloor \frac{N-1}{2} \rfloor} \frac{1 - \cos(\frac{4 \pi j}{N})}{1 - \cos(\frac{2 \pi  j}{N})} = \sum_{j=1}^{\lfloor \frac{N-1}{2} \rfloor} \frac{2 - 2\cos(\frac{2 \pi j}{N})^2}{1 - \cos(\frac{2 \pi  j}{N})}  $$
$$= 2 \sum_{j=1}^{\lfloor \frac{N-1}{2} \rfloor} \left ( 1 + \cos(\frac{2 \pi j}{N}) \right )  =  N - 2 $$

$$Q_{3,3}|_{A=2} = \sum_{j=1}^{\lfloor \frac{N-1}{2} \rfloor} \frac{1 - \cos(\frac{6 \pi j}{N})}{1 - \cos(\frac{2 \pi  j}{N})} = \sum_{j=1}^{\lfloor \frac{N-1}{2} \rfloor}  (1 + 2\cos(\frac{2 \pi  j}{N}))^2  =  \left \lfloor \frac{3N - 9}{2} \right \rfloor  $$

The final form in each of the above cases can be summed using standard properties of Chebyshev polynomials.  We will briefly review some of the properties of Chebyshev polynomials used there and in what follows.  We define $T_j( \cos(\theta)) = \cos( j \theta)$, and $U_j (\cos(\theta)) \sin(\theta) = \sin((j+1) \theta)$.  The polynomials $T_j$ and $U_j$ satisfy many known relations including the composition formula $T_j(T_k(\theta)) = T_{jk} (\theta)$, and the summation formulae

$$\sum_{j=0}^m T_{2j+1}(x) = \frac{U_{2m+1} (x)}{2}$$
$$\sum_{j=0}^m T_{2j}(x) = \frac{U_{2m} (x) + 1}{2}$$

For the induction step we consider the double difference $$D_i = (Q_{i+2,i+2} - Q_{i,i}) - (Q_{i,i} - Q_{i-2,i-2}) = Q_{i+2,i+2} - 2 Q_{i,i} + Q_{i-2,i-2} $$
After writing all the cosines in terms of $\cos(\frac{2 \pi  j}{N})$, we can eventually simplify $D_i$ to

$$D_i = 4 \sum_{j=1}^{\lfloor \frac{N-1}{2} \rfloor}  (1 + \cos(\frac{2 \pi  j}{N}))\  T_i(\cos(\frac{2 \pi  j}{N})) = -4$$

Now we can conclude the induction; assume that  $Q_{i,i}|_{A=2} =\left \lfloor \frac{i}{2}N - \frac{i^2}{2} \right \rfloor $ for $i < j + 2$.  Then

$$Q_{j+2,j+2}|_{A=2} = 2 Q_{j,j}|_{A=2} - Q_{j-2,j-2}|_{A=2} $$
$$= 2  \left \lfloor \frac{j}{2}N - \frac{j^2}{2} \right \rfloor - \left \lfloor \frac{j-2}{2}N - \frac{(j-2)^2}{2} \right \rfloor - 4 = \left \lfloor \frac{j+2}{2}N - \frac{(j+2)^2}{2} \right \rfloor $$

\end{proof}

\begin{theorem} 
For $A=2$, 
$$P_{i,i}|_{A=2} = (2-i)N + \frac{(i^2-i)N}{N-1}$$
\end{theorem}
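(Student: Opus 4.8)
The plan is to specialize the general formula (\ref{Pii}) for $P_{i,i}$ to $A=2$ and recognize that at this value it is essentially the complement of the quantity $Q_{i,i}|_{A=2}$ computed in the previous theorem. At $A=2$ the coefficient $\frac{A}{2}-1$ multiplying $\cos(\frac{2\pi i j}{N})$ inside the square bracket of (\ref{Pii}) vanishes, while $\frac{A}{2}+1 = 2$, so the bracket collapses to the constant $2$. The $j$th summand of the main sum then becomes $u_{0,j}^{-2}\big(2 u_{0,j}^2 - 2 + 2\cos(\frac{2\pi i j}{N})\big) = 2 - \frac{1-\cos(2\pi i j/N)}{1-\cos(2\pi j/N)}$, where I have used $u_{0,j}^2 = 2 - 2\cos(2\pi j/N)$. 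The second piece here is exactly the summand of $Q_{i,i}|_{A=2}$ in (\ref{QiiA2}), which is the key observation that makes the computation short.

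Carrying out the sum over the bulk term gives $P_{i,i}|_{A=2} = r^{-2}\big(2\lfloor\tfrac{N-1}{2}\rfloor - Q_{i,i}|_{A=2}\big)$, to which the even-$N$ boundary contribution $(2r)^{-2}((-1)^i+3)$ from the second line of (\ref{Pii}) must be added when $N$ is even. I then substitute the two ingredients that are already available: the $A=2$ radius formula gives $r^{-2} = \frac{2N}{N-1}$, and the previous theorem gives $Q_{i,i}|_{A=2} = \lfloor\frac{i}{2}N - \frac{i^2}{2}\rfloor = \lfloor\frac{i(N-i)}{2}\rfloor$.

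The only real work is removing the two floor functions, and this is the step I expect to require care. I would split on the parity of $N$. For $N$ odd one has $\lfloor\frac{N-1}{2}\rfloor = \frac{N-1}{2}$, there is no boundary term, and $i(N-i)$ is always even so the floor is exact; substituting $r^{-2}(N-1) - r^{-2}\frac{i(N-i)}{2}$ and collecting over the common denominator $N-1$ yields $(2-i)N + \frac{(i^2-i)N}{N-1}$ directly. For $N$ even one has $\lfloor\frac{N-1}{2}\rfloor = \frac{N}{2}-1$, the floor of $\frac{i(N-i)}{2}$ drops by $\frac{1-(-1)^i}{4}$ when $i$ is odd, and the boundary term $\frac{N((-1)^i+3)}{2(N-1)}$ is present. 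The \emph{crucial check} is that the parity-dependent pieces cancel: after factoring out $\frac{N}{N-1}$, the term $\frac{1-(-1)^i}{2}$ coming from the corrected floor and the term $\frac{(-1)^i+3}{2}$ coming from the boundary add to the constant $2$, so the result carries no dependence on the parity of $i$ and is identical to the odd case. Once this cancellation is verified, both cases reduce to $\frac{2N^2 - 2N - iN^2 + i^2 N}{N-1}$, which is precisely the stated closed form.
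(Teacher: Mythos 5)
Your proposal is correct and takes essentially the same route as the paper: specialize (\ref{Pii}) at $A=2$, rewrite each summand as $2 - \bigl(1-\cos(2\pi ij/N)\bigr)/\bigl(1-\cos(2\pi j/N)\bigr)$ to obtain $P_{i,i}|_{A=2} = \frac{2N}{N-1}\bigl(2\lfloor\frac{N-1}{2}\rfloor - Q_{i,i}|_{A=2} + \text{even-}N\text{ boundary term}\bigr)$, and then split on parities. The only difference is that you carry out explicitly the parity bookkeeping (verifying that the floor-correction $\frac{1-(-1)^i}{2}$ and the boundary contribution $\frac{(-1)^i+3}{2}$ sum to the constant $2$) which the paper dismisses with ``follows easily''; your cancellation checks out.
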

\begin{proof}

From (\ref{Pii}) we have
$$P_{i,i}|_{A=2} = r^{-2} \sum_{j=1}^{\lfloor \frac{N-1}{2} \rfloor} u_{0,j}^{-2} \left ( 2 u_{0,j}^2  - 2 + 2 \cos(\frac{2 \pi  i j}{N}) \right ) + \left \{ \begin{array}{c}  0 \ \text{for $N$ \ odd } \\ (2r)^{-2}((-1)^i + 3) \ \text{for $N$ \ even } \end{array} \right .$$

$$= \left (\frac{2N}{N-1} \right ) \left [ \sum_{j=1}^{\lfloor \frac{N-1}{2} \rfloor} \left ( 2 - \frac{1 -  \cos(\frac{2 \pi  i j}{N})}{1 -  \cos(\frac{2 \pi  j}{N})} \right ) + \left \{ \begin{array}{c}  0 \ \text{for $N$ \ odd } \\ ((-1)^i + 3)/4 \ \text{for $N$ \ even } \end{array} \right . \right ]$$

$$ = \left (\frac{2N}{N-1} \right ) \left [ 2 \lfloor \frac{N-1}{2} \rfloor - Q_{i,i} + \left \{ \begin{array}{c}  0 \ \text{for $N$ \ odd } \\ ((-1)^i + 3)/4 \ \text{for $N$ \ even } \end{array} \right . \right ]$$
and the result follows easily from the previous theorem once we consider all the particular cases of $N$ and $i$ being odd or even.
\end{proof}

So it becomes possible to completely determine the eigenvalues of the Hessian for the regular polygon configuration in the vortex case.

\begin{theorem}\label{VortTheorem}
The eigenvalues of the Hessian of $f$ in the case $A=2$ are $(2-i)N + \frac{(i^2-i)N}{N-1}$ and $\left \lfloor \frac{i}{2}N - \frac{i^2}{2} \right \rfloor$ for $0 \le i \le N-1$.  In the quotient configuration space $\mathcal{C}_N$, the regular polygon has Morse index of $0$ for $N \in \{3,4,5,6\}$, it is degenerate for $N=7$, and has a Morse index of $N-5$ for $N \ge 8$.
\end{theorem}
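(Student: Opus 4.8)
The plan is to exploit the vanishing of the coupling term at $A=2$ to read the eigenvalues off from the two preceding theorems, and then to sort those eigenvalues by sign while carefully accounting for the quotient by rotation.

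\textbf{Eigenvalues.} I would first observe that the off-diagonal entry $S_{i,i}$ in (\ref{Sii}) carries an overall factor $\frac{A-2}{2}$, which vanishes at $A=2$. Hence for every $i$ the block $E_i$ of (\ref{Eidef}) is diagonal, so its two eigenvalues are exactly $P_{i,i}|_{A=2}$ and $Q_{i,i}|_{A=2}$ (equivalently, the pair formula for $\lambda_{(N,i,\pm)}$ collapses to $\{P_{i,i},Q_{i,i}\}$ when $S_{i,i}=0$). Substituting the closed forms from the two preceding theorems yields the claimed eigenvalues $(2-i)N+\frac{(i^2-i)N}{N-1}$ and $\lfloor\frac{i}{2}N-\frac{i^2}{2}\rfloor$ for $0\le i\le N-1$.

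\textbf{Setting up the index count.} The Hessian $D^2f^{\circ}$ is $2N\times 2N$, but $\mathcal{C}_N$ is the quotient by the rotational $S^1$-action, whose orbit direction is the simultaneous rotation of all angles, i.e. the $i=0$ mode $(1,\dots,1)$ of $C$ in the $\theta$-block. This direction carries the eigenvalue $Q_{0,0}=0$ and is the unique mode discarded on passing to $\mathcal{C}_N$; because the $\frac{MI}{2}$ term fixes both scale and center of mass, $f$ is neither scale- nor translation-invariant, so no further zero modes arise (in particular the scaling direction is the $i=0$ mode of the $r$-block, with eigenvalue $P_{0,0}=2N>0$). The Morse index is therefore the number of strictly negative values among the remaining $2N-1$ eigenvalues.

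\textbf{Sorting by sign.} Since $Q_{i,i}=\lfloor i(N-i)/2\rfloor\ge 1$ for $1\le i\le N-1$, the $\theta$-block contributes no negative eigenvalue, and only the removed $Q_{0,0}$ vanishes. The sign of $P_{i,i}$ equals the sign of $g(i):=i^2-Ni+2(N-1)$; I would record $g(0),g(1),g(2)>0$, the symmetry $g(i)=g(N-i)$, and the values $g(3)=g(N-3)=7-N$ and $g(2)=g(N-2)=2$. For $3\le N\le 6$ one has $7-N>0$, so every $P_{i,i}>0$ and the index is $0$. At $N=7$, $g(3)=g(4)=0$ produces two extra zero eigenvalues, so the configuration is degenerate with no negative eigenvalue. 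For $N\ge 8$, since $g$ is an upward parabola symmetric about $N/2$ with $g(2)>0>g(3)$ and $g(N-2)>0>g(N-3)$, its roots lie strictly in $(2,3)$ and $(N-3,N-2)$; hence $g(i)<0$ exactly for $3\le i\le N-3$, giving precisely $N-5$ negative eigenvalues.

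I expect the main obstacle to be the quotient bookkeeping: justifying rigorously that exactly one eigenvalue (the rotational $Q_{0,0}$) is removed in $\mathcal{C}_N$ and that no hidden degeneracies occur, i.e. that the roots of $g$ are strictly non-integral for $N\ge 8$ so that the sign changes cleanly at integer $i$ and no $P_{i,i}$ accidentally vanishes. The remaining checks ($g(0),g(1),g(2)>0$ and the floor bound $Q_{i,i}\ge 1$) are routine, and the three index values $0$, degenerate, and $N-5$ then follow immediately.
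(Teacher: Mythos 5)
Your proposal is correct and follows essentially the same route as the paper: both observe that the factor $\frac{A-2}{2}$ makes $S_{i,i}|_{A=2}=0$, so the blocks $E_i$ are diagonal with eigenvalues $P_{i,i}$ and $Q_{i,i}$ from the two preceding theorems, and the Morse index is then the count of negative eigenvalues across the orthogonal blocks. The paper compresses the sign analysis into one sentence (``the remainder follows from considering the sign of these expressions''), and your explicit bookkeeping---reducing the sign of $P_{i,i}$ to that of $g(i)=i^2-Ni+2(N-1)$, locating its roots strictly in $(2,3)$ and $(N-3,N-2)$ for $N\ge 8$, noting $g(3)=g(4)=0$ at $N=7$, and identifying the single quotiented zero mode $Q_{0,0}=0$ with the rotational direction---is a faithful and correct expansion of exactly that step.
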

\begin{proof}

It is immediate from the general formula (\ref{Sii}) for $S_{i,i}$ that $S_{i,i}|_{A=2} = 0$, so the eigenvalues of the Hessian are simply $Q_{i,i}|_{A=2}$ and $P_{i,i}|_{A=2}$ as given in the previous theorems.  The remainder of the theorem follows from considering the sign of these expressions: since the eigenspace blocks are orthogonal the Morse index is simply the number of negative eigenvalues of all the blocks.

\end{proof}
The fact that the heptagon has a degenerate Hessian in the vortex case is interesting in comparison to the dynamical stability results in \cite{CabralSchmidt}, in which the heptagon was also a degenerate case.

\subsection{The regular polygon for $A > 2$}

Now we prove a theorem characterizing the Morse index of the regular polygon for larger values of $A$.  A previous related result specialized to the Newtonian ($A=3$) case is that the regular $N$-gon does not have index $0$ for $N \ge 6$ \cite{meyer1988bifurcations, slaminka1990central, woerner1990n}.

\begin{lemma}
For $N \ge 2$, 
$$\lambda_{(N,0,+)} > NA, \ \ \ \ \lambda_{(N,0,-)} = 0$$
$$\text{and } \ \  \lambda_{(N,1,\pm)} > 0.$$
\end{lemma}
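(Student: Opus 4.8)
The plan is to read the four relevant eigenvalues directly off the $2\times2$ blocks $E_0$ and $E_1$ of (\ref{Eidef}). I would begin with $i=0$. In (\ref{Sii}) every summand carries the factor $\sin(2\pi\cdot0\cdot j/N)=0$, so $S_{0,0}=0$, and in (\ref{Qii}) every summand carries $1-\cos(2\pi\cdot0\cdot j/N)=0$, so $Q_{0,0}=0$. Hence $E_0=\mathrm{diag}(P_{0,0},0)$ is already diagonal, giving immediately $\lambda_{(N,0,-)}=0$ (the rotational gauge direction that is quotiented out in $\mathcal{C}_N$) and $\lambda_{(N,0,+)}=P_{0,0}$. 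Setting $i=0$ in (\ref{Pii}) turns every cosine into $1$, the bracket collapses to $A\,u_{0,j}^{2}$, and one is left with $P_{0,0}=\frac{A}{2}r^{-A}\sum_{j=1}^{N-1}u_{0,j}^{2-A}$. Inserting the critical-radius identity $\sum_{j=1}^{N-1}u_{0,j}^{2-A}=2Nr^{A}$ (equivalently, the explicit formula for $r$) collapses this to $P_{0,0}=NA$.

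This is where the computation forces my hand: it pins $\lambda_{(N,0,+)}$ to the value $NA$ \emph{exactly}, so the displayed bound can only be established as $\lambda_{(N,0,+)}\ge NA$, attained with equality, and the strict sign should in fact read $\lambda_{(N,0,+)}=NA$. I would record this exact value, since it is precisely what is needed later when $\lambda_{(N,0,+)}$ is compared against the eigenvalues from the other blocks.

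For $i=1$ the plan is to show that the two eigenvalues of $E_1$ are positive; since they are real (the radicand $(P_{i,i}-Q_{i,i})^{2}-4S_{i,i}^{2}=(P_{i,i}-Q_{i,i})^{2}+4|S_{i,i}|^{2}\ge0$ in $\lambda_{(N,i,\pm)}$, as $S_{i,i}$ is imaginary), it suffices to check $\operatorname{tr}E_1>0$ and $\det E_1>0$. Writing $c_j=\cos(2\pi j/N)$, $w_j=u_{0,j}^{-A}$ and using $u_{0,j}^{2}=2(1-c_j)$ to simplify (\ref{Pii}) and (\ref{Qii}) at $i=1$, I anticipate both entries reduce to one positive sum up to a power of $r$, namely the key identity $Q_{1,1}=r^{2}P_{1,1}$ with
$$P_{1,1}=\frac{r^{-A}}{2}\sum_{j=1}^{N-1}w_j(1-c_j)\bigl[A+(A-2)c_j\bigr].$$
For $A\ge2$ one has $A+(A-2)c_j\ge2>0$ and $1-c_j>0$ for $1\le j\le N-1$, so every term is positive; hence $P_{1,1},Q_{1,1}>0$ and $\operatorname{tr}E_1>0$.

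The decisive step is the determinant. From (\ref{Sii}) at $i=1$ (with $\sin^{2}=1-c^{2}$) one gets $\det E_1=r^{2}P_{1,1}^{2}-|S_{1,1}|^{2}=\tfrac14 r^{2-2A}\bigl(X^{2}-(A-2)^{2}Z^{2}\bigr)$, where $X=\sum_j w_j(1-c_j)[A+(A-2)c_j]$ and $Z=\sum_j w_j(1-c_j)(1+c_j)$. I would factor this difference of squares as $(X-Y)(X+Y)$ with $Y=(A-2)Z$: the pointwise cancellation $[A+(A-2)c_j]-(A-2)(1+c_j)=2$ gives $X-Y=2\sum_j w_j(1-c_j)>0$, while $X+Y=2\sum_j w_j(1-c_j)\bigl[(A-1)+(A-2)c_j\bigr]>0$ since $(A-1)+(A-2)c_j\ge1$. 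Thus $\det E_1>0$, and with the positive trace both $\lambda_{(N,1,\pm)}$ are positive. I expect this exact cancellation, rather than a soft Cauchy--Schwarz estimate, to be the crux of the argument; the only remaining care is the boundary bookkeeping for even $N$ (the antipodal term $j=N/2$, where the folded formulas carry their separate corrections) and the degenerate case $N=2$, in which the folded sums are empty but the single $j=1$ term still yields $E_1=\mathrm{diag}(2,\tfrac12)$. Both of these I would verify directly.
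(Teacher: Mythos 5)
Your proof is correct, and it follows the paper's high-level strategy (read the $i=0$ block off directly, then show trace and determinant of the block $E_1$ are positive), but your execution is genuinely different in two respects, and in both you improve on the paper. First, for $i=0$ the paper only says the result is ``immediate''; you carry the computation through and the critical-radius identity $\sum_{j=1}^{N-1}u_{0,j}^{2-A}=2Nr^{A}$ forces $P_{0,0}=NA$ \emph{exactly}, so you are right that the lemma's strict inequality $\lambda_{(N,0,+)}>NA$ is a misstatement and should read $\lambda_{(N,0,+)}=NA$ --- the paper itself confirms this later by recording $\lambda_{(4,0,+)}=4A$ for the square. Second, for $i=1$ the paper expands $P_{1,1}Q_{1,1}+S_{1,1}^{2}$ as a double sum over $(j,k)$ and asserts that the $(A-2)^{2}$ terms cancel under interchange of $j$ and $k$; but the printed coefficient $(c_j-c_k)(c_j+1)$, with $c_j=\cos(2\pi j/N)$, is \emph{not} antisymmetric (its symmetrization is $\tfrac12(c_j-c_k)^{2}$), so the displayed formula carries an error: a spot check at $N=5$, $A=4$ gives $P_{1,1}Q_{1,1}+S_{1,1}^{2}=15\sqrt{5}\approx 33.54$, while the printed double sum evaluates to about $36.34$. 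Your route avoids this entirely: symmetrizing to full-range sums absorbs the even-$N$ midpoint corrections automatically, the structural identity $Q_{1,1}=r^{2}P_{1,1}$ (not observed in the paper, though it is visible in its explicit $N=4$ formulas) reduces the determinant to $\tfrac14 r^{2-2A}\bigl(X^{2}-(A-2)^{2}Z^{2}\bigr)$, and the difference-of-squares factorization with the pointwise cancellation $[A+(A-2)c_j]-(A-2)(1+c_j)=2$ yields exactly the correct symmetrized expansion (whose per-term brace is $4(A-1)+2(A-2)(c_j+c_k)$, i.e.\ no $(A-2)^{2}$ terms survive at all); it also covers the degenerate case $N=2$, where indeed $E_1=\mathrm{diag}(2,\tfrac12)$, consistent with $Q_{1,1}=r^{2}P_{1,1}$ at $r=\tfrac12$. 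One last point in the same spirit: the paper's trace display carries the factor $(r^{2}+2)$, whereas your identity forces $P_{1,1}+Q_{1,1}=(1+r^{2})P_{1,1}$, so the factor should be $(r^{2}+1)$; this is immaterial for the sign conclusion, but your version is the correct one.
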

\begin{proof}
The result for $i=0$ follows immediately from our expressions for $P_{0,0}$, $Q_{0,0}$, and $S_{0,0}$.

For $i=1$, we can calculate the determinant

\begin{align*}
P_{1,1} Q_{1,1}+S_{1,1}^2 = &  r^{-2A+2} \sum_{j=1}^{\lfloor \frac{N-1}{2} \rfloor} \sum_{k=1}^{\lfloor \frac{N-1}{2} \rfloor} u_{0,j}^{-A} u_{0,k}^{-A} (1- \cos(\frac{2 \pi j}{N}))(1 - \cos(\frac{2 \pi k}{N})) \left \{ \right . \\
&  (A-2)^2 (\cos(\frac{2 \pi j}{N}) - \cos(\frac{2 \pi k}{N})) (\cos(\frac{2 \pi j}{N}) + 1) \\
& \left . + \ 4 (A-2) (\cos(\frac{2 \pi j}{N}) + 1) + 4 \right \}
\end{align*}
and we see the coefficients of $(A-2)^2$ will cancel out (under the interchange of $j$ and $k$), and the remaining terms are positive.  Thus the two eigenvalues are either both negative or both positive.  The trace can be simplified into a form that makes it clearly positive:
$$P_{1,1} + Q_{1,1} = r^{-A} \left ( r^2  + 2 \right ) \sum_{j=1}^{\lfloor \frac{N-1}{2}\rfloor} u_{0,j}^{-A} \left (1 - \cos \left (\frac{2 \pi j}{N} \right ) \right )  \left ( \left (A-2 \right )\left (1 + \cos \left (\frac{2 \pi j}{N} \right ) \right ) + 2 \right ) $$
so the two eigenvalues of the $i=1$ block must be positive.
\end{proof}

For $i \in \{1, \ldots N-1 \}$, $\lambda_{(N,i,\pm)} = \lambda_{(N,N-i,\pm)}$ for each sign choice and so we can restrict our attention to $1 \le i \le \lfloor \frac{N}{2} \rfloor$.  Apart from the special cases of small $N$, it turns out that the interesting eigenvalue of the Hessian is always $\lambda_{(N,2,-)}$ (which equals $\lambda_{(N,N-2,-)}$).  

\begin{theorem} 	
For each $N \ge 5$ there is a value of $A=A_N$, and $i \in \{1, \ldots \lfloor \frac{N}{2} \rfloor \}$, such that the eigenvalues of the Hessian of $f$ for the equal-mass regular polygon satisfy
$$\lambda_{(N,i,+)} > 0,$$
and
$$\lambda_{(N,i,-)}  > 0  \text{ for } i < 2, \ \ \ \lambda_{(N,i,-)}  < 0  \text{ for } i\ge 2$$

for all $A> A_N$
so the Morse index of $f$ for the regular polygon on the quotient configuration space $\mathcal{C}_N$ is  $N-3$ for $A>A_N$. 
\end{theorem}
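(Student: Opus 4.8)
The plan is to reduce the Morse index to a count of negative eigenvalues and then to determine the sign of each $2\times 2$ block $E_i$ separately. Because the change of basis by $\mathrm{diag}(C,C)$ is a similarity transformation and the blocks $E_i$ are mutually orthogonal, the Morse index on $\mathcal{C}_N$ is exactly the number of negative eigenvalues among $\{\lambda_{(N,i,\pm)}:0\le i\le N-1\}$ once the rotation mode $\lambda_{(N,0,-)}=0$ is discarded. The preceding lemma already settles $i=0$ (one positive eigenvalue, one zero) and $i=1$ (both positive), and by the symmetry $\lambda_{(N,i,\pm)}=\lambda_{(N,N-i,\pm)}$ it settles $i=N-1$ as well. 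Everything therefore reduces to the blocks with $2\le i\le\lfloor N/2\rfloor$: I will show that for $A$ large each contributes one negative and one positive eigenvalue. Granting this, the symmetry fills in $\lfloor N/2\rfloor<i\le N-2$, so the negative eigenvalues are precisely $\lambda_{(N,i,-)}$ for $2\le i\le N-2$, a total of $N-3$, which is the asserted index.

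Since the discriminant $(P_{i,i}-Q_{i,i})^2-4S_{i,i}^2>0$, the two eigenvalues of $E_i$ are real, and the sign of their product is the sign of $\det E_i=P_{i,i}Q_{i,i}+S_{i,i}^2$; hence it suffices to prove $\det E_i<0$, which forces $\lambda_{(N,i,-)}<0<\lambda_{(N,i,+)}$. The engine is the asymptotics as $A\to\infty$. Because $r$ increases to $r_\infty(N)$ with $p_{0,1}=ru_{0,1}\to 1$ while $p_{0,j}=ru_{0,j}\to u_{0,j}/u_{0,1}>1$ for $j\ge 2$, every sum in (\ref{Pii}), (\ref{Qii}), (\ref{Sii}) is dominated by its $j=1$ term, with all other terms (and the even-$N$ corrections, which carry the still smaller factor $(2r)^{-A}=(ru_{0,N/2})^{-A}$) exponentially suppressed. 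Writing $\gamma_i=\cos(2\pi i/N)$ and using $u_{0,1}^2=2(1-\gamma_1)$ and $\sin^2(2\pi i/N)=1-\gamma_i^2$, a direct (if tedious) expansion of the $j=1$ contributions shows that $P_{i,i}Q_{i,i}$ and $-S_{i,i}^2=|S_{i,i}|^2$ have the \emph{same} leading term $r^{-2A+2}u_{0,1}^{-2A}A^2(1-\gamma_i^2)(1-\gamma_1^2)$, which therefore cancels in $\det E_i$. What survives is
\begin{equation*}
\det E_i = r^{-2A+2}u_{0,1}^{-2A}\Big(4(1-\gamma_i)\big[A(1-\gamma_1-\gamma_1^2+\gamma_i)+(2\gamma_1^2-1-\gamma_i)\big]+O\big(A^2(u_{0,1}/u_{0,2})^A\big)\Big),
\end{equation*}
so, as $1-\gamma_i>0$, the large-$A$ sign of $\det E_i$ is governed by the coefficient $1-\gamma_1-\gamma_1^2+\gamma_i$.

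It remains to sign that coefficient. For $i=1$ it equals $1-\gamma_1^2>0$, recovering the lemma. For $i=2$ the identity $\gamma_2=2\gamma_1^2-1$ gives $1-\gamma_1-\gamma_1^2+\gamma_2=\gamma_1(\gamma_1-1)$, which is strictly negative exactly when $\gamma_1=\cos(2\pi/N)>0$, i.e. when $N\ge 5$ — precisely the hypothesis. Since $\gamma_i$ is strictly decreasing for $0\le i\le N/2$, we have $\gamma_i\le\gamma_2$ and hence $1-\gamma_1-\gamma_1^2+\gamma_i\le\gamma_1(\gamma_1-1)<0$ for every $2\le i\le\lfloor N/2\rfloor$. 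Thus the bracketed expression is affine in $A$ with strictly negative slope (bounded away from $0$ uniformly in $i$), so it is negative for all $A$ beyond its finite root; taking $A_N$ to be the maximum of those roots over $2\le i\le\lfloor N/2\rfloor$ makes the surviving $j=1$ term negative for all $A>A_N$.

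The main obstacle — and the only delicate point — is exactly this $A^2$ cancellation: the leading-order asymptotics are uninformative, so one must extract the next order correctly, and to pass from an asymptotic statement to a conclusion valid for \emph{all} $A>A_N$ one must check that the exponentially small remainder cannot overturn the sign. Here the constant $p_{0,1}/p_{0,2}=u_{0,1}/u_{0,2}=1/(2\cos(\pi/N))<1$ is essential, since it is independent of $A$ (the factor $r$ cancels in the ratio): the surviving $j=1$ term is of order $A\,r^{-2A}u_{0,1}^{-2A}$ with a coefficient bounded away from zero, whereas every discarded cross term is of order $A^2\,r^{-2A}u_{0,1}^{-A}u_{0,j}^{-A}$, so their ratio is $O\big(A\,(u_{0,1}/u_{0,2})^A\big)\to 0$. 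Enlarging $A_N$ if necessary so that this ratio is below $1$ for all relevant $i$ guarantees $\det E_i<0$, hence $\lambda_{(N,i,-)}<0<\lambda_{(N,i,+)}$, for all $A>A_N$; combined with the first paragraph this yields Morse index $N-3$.
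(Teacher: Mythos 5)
Your proposal is correct and follows the same strategy as the paper's proof: reduce the index count to the sign of $\det E_i = P_{i,i}Q_{i,i}+S_{i,i}^2$ for each block, observe that the quadratic-in-$A$ (i.e.\ $B^2$) terms cancel on the diagonal $j=k$ of the double sum, and let the $j=k=1$ term, which is linear in $A$ and carries the smallest power $u_{0,1}^{-2A}$, dominate everything else for large $A$. Indeed your surviving bracket $4(1-\gamma_i)\left[A(1-\gamma_1-\gamma_1^2+\gamma_i)+(2\gamma_1^2-1-\gamma_i)\right]$ agrees exactly with the paper's diagonal $j=1$ term once the overall minus sign is absorbed. The one place you genuinely improve on the paper is the signing of the slope: the paper expands in $\theta = 2\pi/N$ and reads off the leading coefficient $(i^2-3)(A-2)i^2\theta^4$, which taken literally is only the small-$\theta$ (large-$N$) asymptotics, whereas your exact identity $1-\gamma_1-\gamma_1^2+\gamma_2=\gamma_1(\gamma_1-1)$, combined with the monotonicity $\gamma_i\le\gamma_2$ for $2\le i\le\lfloor N/2\rfloor$, signs the coefficient exactly for every fixed $N\ge 5$, making the threshold $N\ge 5$ (equivalently $\gamma_1>0$) transparent and removing any asymptotic caveat at small $N$ such as $N=5,6$. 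You are also more explicit about the remainder control — the ratio $u_{0,1}/u_{0,2}=1/(2\cos(\pi/N))<1$ is independent of $A$, so every discarded term is $O\bigl(A\,(u_{0,1}/u_{0,2})^A\bigr)$ relative to the surviving one — which the paper compresses into its $O(N^2A^2\theta^{A+4})$ error term. Both arguments then conclude identically: the blocks $i=0,1$ (and $i=N-1$ by the symmetry $\lambda_{(N,i,\pm)}=\lambda_{(N,N-i,\pm)}$) contribute no negative eigenvalues by the preceding lemma, and each of the $N-3$ blocks with $2\le i\le N-2$ contributes exactly one, giving Morse index $N-3$ on $\mathcal{C}_N$ after discarding the rotational zero eigenvalue.
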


\begin{proof}
For the purposes of the Morse index we only need to determine the sign of the eigenvalues of the Hessian of $f$, so for each two by two block $E_i$ (cf. Eq. (\ref{Eidef})) we need to know the sign of $P_{i,i} Q_{i,i}+S_{i,i}^2$.  If this is negative then the eigenvalues will have opposite signs.  So we examine the terms of the sums in our expression for $P_{i,i} Q_{i,i}+S_{i,i}^2$. 

In order to make the following rather large expressions more managable we use the notations $\theta = \frac{2 \pi}{N}$, $c_i = \cos(i \theta)$, and $s_i = \sin(i \theta)$; it will also be convenient to use $B = A - 2$ as well as $A$ because of the structure of $S_{i,i}^2$.  With these we have:

\begin{align*}
P_{i,i} Q_{i,i}+S_{i,i}^2 = & - r^{-2A+2} \sum_{j=1}^{\lfloor \frac{N-1}{2} \rfloor} \sum_{k=1}^{\lfloor \frac{N-1}{2} \rfloor} u_{0,j}^{-A} u_{0,k}^{-A} \left \{ 4 (1 - u_{0,j}^2 - c_{ij}) (1 - c_{ik}) \right . \\
+ &  B (1 - c_{ik})(u_{0,j}^2 u_{0,k}^2 - c_{ij} u_{0,j}^2 + c_{ij} u_{0,k}^2 - 5 u_{0,j}^2 - u_{0,k}^2 - 4 c_{ij} + 4) \\
+ &  \frac{B^2}{4} \left ( 4 s_{ij} s_{ik} s_{j} s_{k} -c_{ij} c_{ik} u_{0,j}^2 u_{0,k}^2 + c_{ij} u_{0,j}^2 u_{0,k}^2 - c_{ik} u_{0,j}^2 u_{0,k}^2  \right . \\
+ & \left . \left . 4 c_{ij} c_{ik} u_{0,j}^2 + u_{0,j}^2 u_{0,k}^2 - 4 c_{ij} u_{0,j}^2 + 4 c_{ik} u_{0,j}^2 - 4 u_{0,j}^2  \right ) \right \}
\end{align*}
which fortunately simplifies a little after using the fact that $u_{0,j}^2 = 2 - 2 \cos(j \theta) = 2 - 2 c_j$:

\begin{align*}
P_{i,i} Q_{i,i}+S_{i,i}^2 = &  - r^{-2A+2}  \sum_{j=1}^{\lfloor \frac{N-1}{2} \rfloor} \sum_{k=1}^{\lfloor \frac{N-1}{2} \rfloor} u_{0,j}^{-A} u_{0,k}^{-A} \left \{ -4 (1 - c_{ik}) (1 + c_{ij} -  2 c_{j}) \right . \\
+ &  \ 2 B (1 - c_{ik})(c_{ij} c_{j} - c_{ij} c_{k} + 2 c_{j} c_{k} - 2 c_{ij} + 3 c_{j} - c_{k} - 2) \\
+ &  \left . B^2 \left ( s_{ij} s_{ik} s_{j} s_{k} -(1 + c_{ij}) (1 - c_{ik}) (1 - c_{j}) (1 + c_{k})   \right ) \right \}
\end{align*}

Next we separate out the diagonal terms, since the $B^2$ coefficient vanishes for those, and we want to estimate the leading term:
\begin{align*}
P_{i,i} Q_{i,i}+S_{i,i}^2 = &  - r^{-2A+2}  \left [ \sum_{j=1}^{\lfloor \frac{N-1}{2} \rfloor } u_{0,j}^{-2A} \left \{ -4(1 - c_{ij}) (1 + c_{ij} - 2 c_j) \right . \right . \\
& \left . + 4 B (1 - c_{ij}) (c_j^2 - c_{ij} + c_j - 1) \right \} \\
& + \sum_{j=1}^{\lfloor \frac{N-1}{2} \rfloor} \sum_{k=1, k \neq j}^{\lfloor \frac{N-1}{2} \rfloor} u_{0,j}^{-A} u_{0,k}^{-A} \left \{ -4 (1 - c_{ik}) (1 + c_{ij} -  2 c_{j}) \right . \\
+ &  \ 2 B (1 - c_{ik})(c_{ij} c_{j} - c_{ij} c_{k} + 2 c_{j} c_{k} - 2 c_{ij} + 3 c_{j} - c_{k} - 2) \\
+ & \left . \left . B^2 \left ( s_{ij} s_{ik} s_{j} s_{k} -(1 + c_{ij}) (1 - c_{ik}) (1 - c_{j}) (1 + c_{k})   \right ) \right \} \right ]
\end{align*}

For a fixed $N$, the leading term ($j=k=1$) expanded in $\theta$ dominates for large enough $A$:

$$P_{i,i} Q_{i,i}+S_{i,i}^2  = - r^{-2A+2} u_{0,1}^{-2A} \left [ (i^2 - 3) (A-2) i^2 \theta^{4} + O(N^2 A^2 \theta^{A+4} ) \right ]$$
and for such $A$ it is negative for $i \ge 2$.

\end{proof}

Our numerical investigations suggest a stronger version which we have been unable to prove.

\begin{conj} \label{MainConj}
We conjecture that the previous theorem can be strengthened to say that there exists a unique value $A_N$ for each $N>4$ such that for $A < A_N$ the Morse index of $f$ for the regular polygon on the quotient configuration space $\mathcal{C}_N$ is  $N-5$, and the Morse index is $N-3$ for $A>A_N$. Furthermore, the $A_N$ are monotonically decreasing in $N$ with $\lim_{N \rightarrow \infty} A_N = 2$.
\end{conj}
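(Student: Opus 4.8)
The plan is to read the Morse index off the $2\times2$ blocks $E_i$ of Eq.~(\ref{Eidef}) and to track, for each $i$, how the number of negative eigenvalues of $E_i$ varies as $A$ runs over $[2,\infty)$. Because $E_i$ and $E_{N-i}$ have the same eigenvalues and $Q_{i,i}>0$ for $A\ge2$ (by Eq.~(\ref{Qii})), each block contributes $0$, $1$, or $2$ negative eigenvalues according as $\det E_i=P_{i,i}Q_{i,i}+S_{i,i}^2$ is positive with positive trace, is negative, or is positive with negative trace; the zero mode of $E_0$ drops out in the quotient. The Lemma already pins the blocks $i=0,1,N-1$ at contribution $0$ for all $A\ge2$. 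The conjecture then splits into three sign statements: (I) the blocks $3\le i\le N-3$ contribute exactly $1$ each for every $A\ge2$, giving a constant $N-5$; (II) the blocks $i=2,N-2$ switch together from contribution $0$ to contribution $1$ at a single threshold $A_N$, adding the remaining $2$; and (III) the thresholds $A_N$ decrease to $2$. I would take as base case the fact that at $A=2$ the index equals $N-5$, which follows from Theorem~\ref{VortTheorem} for $N\ge8$; the low cases $N\in\{5,6,7\}$ (where the vortex index is $0$, $0$, and degenerate) are finite and must be checked by direct computation, and are where the literal form of the statement needs care.

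For (I), I would show $\det E_i<0$ for all $A\ge2$ when $3\le i\le N-3$. At $A=2$ this is immediate, since $S_{i,i}=0$, $Q_{i,i}>0$, and $P_{i,i}|_{A=2}=N(i^2-iN+2N-2)/(N-1)<0$ in precisely this range of $i$ (for $N\ge8$), so $\det E_i=P_{i,i}Q_{i,i}<0$. To propagate negativity to all $A>2$, I would work from the simplified double sum for $P_{i,i}Q_{i,i}+S_{i,i}^2$ derived in the proof of the preceding theorem, separating the diagonal ($j=k$) terms, whose $B^2$ coefficient vanishes, from the off-diagonal pairs, and grouping each $(j,k)$ with $(k,j)$ so that the antisymmetric piece $s_{ij}s_{ik}s_js_k$ telescopes; the aim is a manifestly negative expression for $3\le i\le N-3$. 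A possibly cleaner alternative is to prove that $\det E_i$ is monotonically decreasing in $A$ on $[2,\infty)$ for these $i$, so that it can only move further below its already-negative value at $A=2$.

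For (II), I would normalize by the positive factor $r^{-2A+2}$ and study $G_N(A):=r^{2A-2}\det E_2=\tilde P_2\tilde Q_2+\tilde S_2^{\,2}$, where $\tilde P_2=r^{A}P_{2,2}$, $\tilde Q_2=r^{A-2}Q_{2,2}$, $\tilde S_2=r^{A-1}S_{2,2}$ depend on $A,N$ only through the sums $\sum_j u_{0,j}^{-A}(\cdots)$. One finds $G_N(2)=N-2>0$ from the vortex values of $P_{2,2}$ and $Q_{2,2}$ (since $S_{2,2}|_{A=2}=0$), while the leading-term estimate at the end of the proof of the preceding theorem, specialized to $i=2$ where $i^2-3=1>0$, gives $G_N(A)\sim -u_{0,1}^{-2A}\,4B\,\theta^4<0$ for $A>2$; that theorem also guarantees $G_N(A)<0$ for $A$ large at every $N$. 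Hence $G_N$ changes sign on $(2,\infty)$, and $A_N$ is a sign-change point. \emph{The crux is uniqueness}: I would aim to show $G_N$ has a single zero by proving $G_N'(A)<0$ at every zero (so only downward crossings occur), equivalently that $\lambda_{(N,2,-)}$ is strictly decreasing in $A$. Provided the trace $P_{2,2}+Q_{2,2}>0$ for all $A\ge2$ (which I would verify by a manifestly-positive rewriting as in the Lemma's $i=1$ trace), one has $\lambda_{(N,2,+)}>0$ throughout, so the sign of $\det E_2$ equals that of $\lambda_{(N,2,-)}$, and a single downward crossing yields both uniqueness of $A_N$ and the index values $N-5$ below and $N-3$ above.

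Finally, for (III), the limit $\lim_{N\to\infty}A_N=2$ should follow by making the $\theta=2\pi/N$ expansion uniform: for fixed $B=A-2>0$ the $j=k=1$ term dominates, the remainder being $O(N^{2-A}A^2)$ relative to it, so $G_N(2+\varepsilon)<0$ for all large $N$, forcing $A_N\le2+\varepsilon$; since $A_N\ge2$ this gives the limit. For the monotonicity $A_{N+1}<A_N$ I would compare consecutive polygons, proving the pointwise inequality $G_{N+1}(A)\le G_N(A)$ (or at least $G_{N+1}(A_N)\le0$), which with the single-crossing property of (II) immediately yields $A_{N+1}\le A_N$. \emph{The main obstacle} I anticipate is exactly the two monotonicity inputs---the single-crossing property in (II) and the comparison inequality in (III)---since each requires controlling the sign of a derivative in $A$, or of a difference in $N$, of the trigonometric double sums, quantities with no evident definite sign; the author's inability to settle the conjecture strongly suggests this is where a genuinely new estimate, beyond the term-by-term bounds that should suffice for (I) and the limit, will be required.
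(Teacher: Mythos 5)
You should first be aware that the statement you were asked to prove is Conjecture~\ref{MainConj}, which the paper itself does \emph{not} prove: the author states explicitly that it is supported by numerical investigation and could not be established rigorously, the only proved results being the large-$A$ index theorem and the vortex-case Theorem~\ref{VortTheorem}. So there is no paper proof to match, and your submission, which is a strategy rather than a proof, cannot be judged complete. That said, your reduction is the right one and is faithful to the paper's framework: reading the index off the blocks $E_i$ of Eq.~(\ref{Eidef}), using $Q_{i,i}>0$ from Eq.~(\ref{Qii}) so that each block contributes one negative eigenvalue exactly when $\det E_i=P_{i,i}Q_{i,i}+S_{i,i}^2<0$ (note that with $Q_{i,i}>0$ and $S_{i,i}^2\le 0$, the case $\det E_i>0$ with negative trace is vacuous, so your trichotomy simplifies), and your base-case arithmetic checks out: $P_{i,i}|_{A=2}=N(i^2-iN+2N-2)/(N-1)$ is negative precisely for $3\le i\le N-3$ when $N\ge 8$, and $G_N(2)=N-2>0$ is correct using $r^2=(N-1)/(2N)$ at $A=2$.

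The genuine gaps are exactly where you flagged them, plus one misstatement. First, your claim (I) as written ("blocks $3\le i\le N-3$ contribute exactly $1$ each for every $A\ge 2$") is \emph{false} at $N=6$: there $i=3$ gives $i^2-iN+2N-2=1>0$, so $P_{3,3}|_{A=2}>0$ and $\det E_3>0$ at $A=2$, consistent with the hexagon having vortex index $0$ in Theorem~\ref{VortTheorem}; for $N=7$ the block is degenerate ($P_{3,3}|_{A=2}=0$). So (I) must be restricted to $A>2$, and for $N\in\{6,7\}$ one has to prove the determinant goes strictly negative \emph{immediately} as $A$ increases through $2$ — a nontrivial boundary analysis you have not supplied. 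Second, your use of the paper's leading-term estimate "$G_N(A)\sim -u_{0,1}^{-2A}4B\theta^4<0$ for $A>2$" overreaches: that expansion was derived for fixed $N$ and \emph{large} $A$, with error $O(N^2A^2\theta^{A+4})$ whose implied constant is not shown to be uniform near $A=2$ or in $N$; both your existence-of-crossing argument and your limit $A_N\to 2$ in (III) depend on a uniformity that remains to be proved. Third, and most fundamentally, the single-crossing property in (II) (e.g.\ $G_N'(A)<0$ at every zero, or monotonicity of $\lambda_{(N,2,-)}$ in $A$) and the comparison $G_{N+1}(A)\le G_N(A)$ in (III) are asserted as goals, not proved; these sign-of-derivative statements about the trigonometric double sums are precisely the obstruction that left this as a conjecture in the paper, and no mechanism for establishing them is offered beyond hoping for "a genuinely new estimate." As it stands, your proposal is a sound and well-organized research plan whose verified content reproduces what the paper already proves, while every step that would elevate the conjecture to a theorem remains open.
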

To illustrate this conjecture a little more precisely we found an ad-hoc Pad{\'e} approximation to $A_N$ which appears to have a relative error of less than 1\% for $5 \le N \le 200$:
$$A_N \approx \frac{2 \, N^{3} - 2.46 \, N^{2} + 0.713 \, N - 91.5}{N^{3} - 3.3\, N^{2} - 17.17 \, N + 58.5}$$

It is an interesting curiousity that  $A_7$ (from Conjecture \ref{MainConj}) is exactly equal to $4$; we did not prove this in detail (i.e. it remains to show the uniqueness of the zero eigenvalue for $A=4$) but the key fact is not difficult to show:
\begin{lemma}
$\lambda_{(7,2,-)}|_{A=4}$ is exactly equal to $0$.
\end{lemma}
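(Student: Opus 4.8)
The plan is to reduce the claim to a single scalar identity and then to a finite computation in the field $\mathbf{Q}(\cos(2\pi/7))$. The two eigenvalues of the block $E_2$ from \eqref{Eidef} multiply to $\det E_2 = P_{2,2}Q_{2,2} + S_{2,2}^2$. Since $S_{2,2}$ is purely imaginary, $S_{2,2}^2$ is a nonpositive real number, so $\det E_2$ is real. When $\det E_2 = 0$ the discriminant in the eigenvalue formula collapses to $(P_{2,2}-Q_{2,2})^2 - 4 S_{2,2}^2 = (P_{2,2}+Q_{2,2})^2$, whence $\lambda_{(7,2,-)} = \tfrac12\big(P_{2,2}+Q_{2,2} - |P_{2,2}+Q_{2,2}|\big)$. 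Thus it suffices to show $\det E_2 = 0$ at $N=7$, $A=4$ together with $P_{2,2}+Q_{2,2} > 0$, the latter pinning the vanishing eigenvalue as the $-$ branch rather than the $+$ branch. Positivity of the trace is immediate: $Q_{2,2}>0$ for $A\ge 2$ by the remark following \eqref{Qii}, and $P_{2,2}+Q_{2,2}>0$ then follows from the explicit evaluation of \eqref{Pii}.

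First I would specialize the formulas. For $N=7$ we have $\lfloor (N-1)/2\rfloor = 3$, so all sums run over $j\in\{1,2,3\}$ with no parity correction term (as $N$ is odd); I set $i=2$ and $B = A-2 = 2$. Substituting into \eqref{Pii}, \eqref{Qii}, and \eqref{Sii} gives $P_{2,2}$, $Q_{2,2}$, and $S_{2,2}$ as explicit three-term sums with weights $u_{0,j}^{-4} = (2-2c_j)^{-2}$. The key structural observation is that $P_{2,2}\propto r^{-A}$, $Q_{2,2}\propto r^{-A+2}$, and $S_{2,2}\propto r^{-A+1}$, so both $P_{2,2}Q_{2,2}$ and $S_{2,2}^2$ carry the common nonzero factor $r^{-2A+2} = r^{-6}$. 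Dividing it out, the identity $\det E_2 = 0$ becomes a statement about the $r$-independent parts alone, that is, a relation among the numbers $c_j = \cos(2\pi j/7)$ and $s_j = \sin(2\pi j/7)$ for $j=1,2,3$.

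Next I would eliminate the sines. The only sines occur in $S_{2,2}$, where each summand contains the product $s_{2j}s_j$; the identity $s_{2j}s_j = \tfrac12(c_j - c_{3j})$ rewrites $S_{2,2}$ as a cosine-only sum before squaring, so that $P_{2,2}Q_{2,2}$ and $S_{2,2}^2$ both lie in $\mathbf{Q}(c_1)$. I would then reduce all indices modulo $7$ (using $c_4 = c_3$, $c_5 = c_2$, $c_6 = c_1$, $c_9 = c_2$, and so on), clear the $(2-2c_j)^{-2}$ denominators, and evaluate using the elementary symmetric relations $c_1+c_2+c_3 = -\tfrac12$, $c_1c_2+c_1c_3+c_2c_3 = -\tfrac12$, $c_1c_2c_3 = \tfrac18$ (equivalently, writing $c_2, c_3$ as Chebyshev polynomials in $c_1$ and reducing modulo the minimal polynomial $8x^3+4x^2-4x-1$ of $\cos(2\pi/7)$). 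This turns the claim into a finite rational identity whose value I expect to be exactly $0$; a numerical pre-check gives $r^{A}P_{2,2} \approx 0.500$, $r^{A-2}Q_{2,2} \approx 12.50$, and $r^{A-1}S_{2,2}/\mathbb{I} \approx 2.501$, so that $P_{2,2}Q_{2,2} + S_{2,2}^2 \propto 6.252 - 6.254 \approx 0$, consistent with the asserted value.

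The main obstacle is purely the algebraic bookkeeping of the last step rather than anything conceptual: $S_{2,2}^2$ is the square of a three-term sum and so contributes cross terms $s_{2j}s_j s_{2k}s_k$, the index reductions modulo $7$ must be tracked carefully, and the denominators $(2-2c_j)^2$ must be cleared consistently across $P_{2,2}Q_{2,2}$ and $S_{2,2}^2$. This is finite and mechanical but error-prone, and the safest organization is to collapse $S_{2,2}$ to a cosine-only expression first (via $s_{2j}s_j = \tfrac12(c_j-c_{3j})$) so that the entire computation takes place within $\mathbf{Q}(c_1)$ and reduces to a single polynomial evaluation modulo the degree-three minimal polynomial.
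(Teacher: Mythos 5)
Your proposal is correct and is essentially the paper's own argument: a direct exact evaluation of $P_{2,2}$, $Q_{2,2}$, $S_{2,2}$ at $N=7$, $A=4$ from \eqref{Pii}, \eqref{Qii}, \eqref{Sii} --- the paper obtains $P_{2,2}=\frac{7}{4}$, $Q_{2,2}=\sqrt{4375/8}$, $S_{2,2}=\mathbb{I}\,\sqrt[4]{214375/128}$ via radical expressions for $\cos(\pi/7)$, which match your normalized rational values $r^{A}P_{2,2}=\frac{1}{2}$, $r^{A-2}Q_{2,2}=\frac{25}{2}$, $r^{A-1}S_{2,2}/\mathbb{I}=\frac{5}{2}$ since $r^{4}=2/7$ --- followed by substitution into the eigenvalue formula. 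Your repackaging as $\det E_2=P_{2,2}Q_{2,2}+S_{2,2}^{2}=0$ together with trace positivity, carried out in $\mathbf{Q}(\cos(2\pi/7))$ modulo the degree-three minimal polynomial instead of with nested radicals, is only an organizational variant of the same computation, though it has the small merit of explicitly pinning the vanishing eigenvalue to the $-$ branch, a point the paper leaves implicit.
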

\begin{proof}
 This is a straightforward calculation; we have the explicit formulae  \ref{Pii}, \ref{Qii}, and \ref{Sii}, from which we can compute the eigenvalues of the Hessian of $f$.  In the case that $N=7$, these expressions are in terms of trigonometric functions of multiples of $\frac{\pi}{7}$.  These quantities can be calculated as cubic roots (and square roots of cubic roots), for example:
 $$\cos(\frac{\pi}{7}) = {\left(\frac{7}{144} \mathbb{I} \, \sqrt{3} - \frac{7}{432}\right)}^{\frac{1}{3}} + \frac{7}{36 \, {\left(\frac{7}{144} \mathbb{I} \, \sqrt{3} - \frac{7}{432}\right)}^{\frac{1}{3}}} + \frac{1}{6}.$$
These identities let us simplify:

 $$P_{2,2}|_{A=4, N=7} = \frac{7}{4},$$
 $$Q_{2,2}|_{A=4, N=7} =\sqrt{\frac{4375}{8}}$$
 and 
 $$S_{2,2}|_{A=4, N=7} =  \mathbb{I} \sqrt[4]{\frac{214375}{128}}$$

and so $\lambda_{(7,2,-)}|_{A=4} = \frac{1}{2}(P_{2,2}+Q_{2,2} \sqrt{(P_{2,2}-Q_{2,2} )^2 - 4 S_{2,2}^2})|_{A=4, N=7} = 0$.
 
\end{proof}

\section{Equal mass central configurations for small $N$}

In this section we survey equal mass central configurations for small $N$ (up to $N=9$) for our variable homogeneous potential.  Apart from some new results in the four-body problem, the section primarily contains conjectures.

\subsection{The three-body problem}
For the Newtonian three-body problem the central configurations are well known, being characterized by Euler \cite{Euler_col} and Lagrange \cite{Lagrange_3} for all positive masses.  Very little about these configurations changes as the potential exponent is changed (i.e. for $A \in [2,\infty)$): the equilateral triangle is always a central configuration and a minimum for $f$, and there is a symmetric collinear central configuration with index $1$.  There are two distinct equivalence classes of equilateral triangles (multiplicity 2), and three distinct equivalence classes of collinear configurations (multiplicity 3).  Configurations are considered equivalent if there is an orientation-preserving isometry (rigid motion) between them.  For all $A \in [2,\infty)$, the Poincar{\'e} polynomial of the reduced configuration space is $P(t) = 1+2t$, and the Morse polynomial of $f$ is $M(t) + (1+t) = 2 + 3t$.  

\subsection{The four-body problem}

Although unsolved problems remain, for the Newtonian ($A=3$) and vortex case ($A=2$) of the four-body problem the central configurations are well understood, with many particular results for configurations with some special symmetry or other properties \cite{lehmann1891ueber, dziobek_ueber_1900, MacMillanBartky32, Gannaway81, Arens, oneil_stationary_1987, hampton02, long2002four, hampton_co-circular_2005, hampton_finiteness_2005, leandro_central_2006, perez2007convex, albouy_symmetry_2008, hampton_finiteness_2009, pina2010central, shi2010classification, cors2012four, xie2012isosceles, alvarez2013symmetric, barros2014bifurcations, hampton_vort_pairs2014, erdi2016central, deng2017four, fernandes2017convex, corbera2018four, santoprete2018four}.  The equal mass case is especially well characterized.  The investigations of Sim{\'o} \cite{simo_relative_1978} strongly indicate that the equal mass case has the largest number of central configurations for the Newtonian case, although a formal proof of that is still unavailable.  Some of the bifurcations found by Sim{\'o} have been rigorous analyzed more recently \cite{rusu2016bifurcations}.

Albouy \cite{albouy_1995} proved for a rather general potential function (which includes our homogeneous potential for $A \in [2,\infty)$) that for four bodies of equal mass the planar central configurations always have at least an axis of symmetry.  For the special cases of $A=2$ and $A=3$ he completely characterized the central configurations \cite{albouy_1996}.  For $A=2$,  the square is the only strictly planar convex central configuration, and the equilateral triangle with a central fourth mass is the only concave central configuration.  For $A=3$ there is a second concave central configuration with a central mass on the axis of symmetry of an isosceles triangle.  Albouy conjectured that for $A>2$ there are no additional central configurations compared to the $A=3$ case.  In this section we show this conjecture is true at least for $A>3$, and furthermore characterize the Morse indices of $f$ of the equal-mass four-body central configurations for all $A>3$.  These configurations are shown for $A=3$ and $A=20$ in Figure (\ref{fourbodyccs}).

\begin{figure}
\includegraphics[width=4.8in]{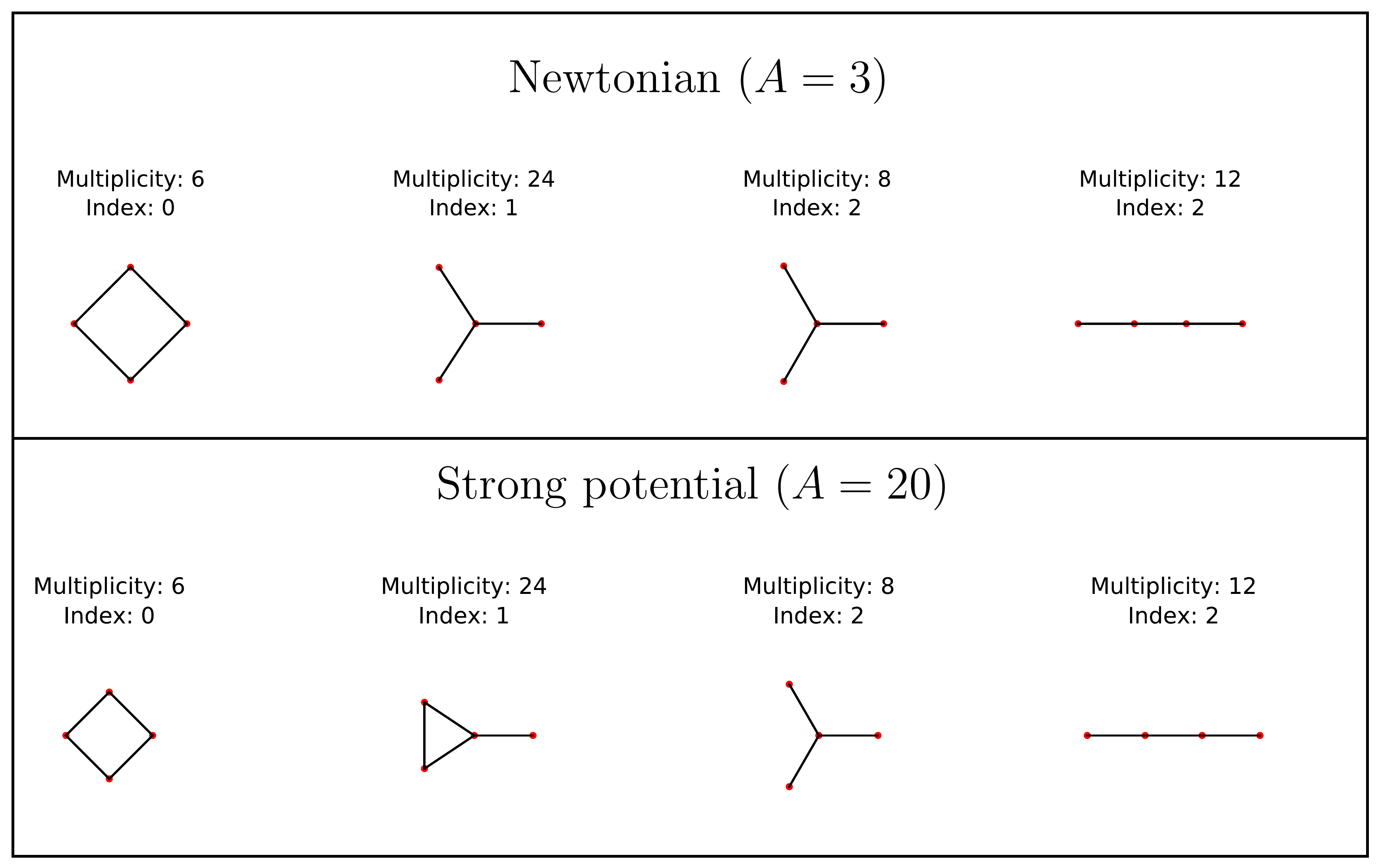}
\caption{Central configurations of the equal-mass four-body problem for $A=3$ and $A=20$.}
\label{fourbodyccs}
\end{figure}

For the regular polygon central configuration with $N=4$ (the square), we can compute the radius $r$:
$$r = \left ( \frac{2(\sqrt{2})^{2-A}+ 2^{2-A}}{8} \right )^{\frac{1}{A}} = \left ( 2^{-1-A} [2^{A/2} + 1 ] \right )^{\frac{1}{A}}$$
(this is a special case of the general result given in Section \ref{regpolysec}).

We can find the Morse index of the square by explicitly computing the eigenvalues of the Hessian of $f$.  The first two of the eight eigenvalues of the Hessian of $f$ are $\lambda_{(4, 0, +)} = 4A$ and $\lambda_{(4, 0, +)} = 0$.  The other six are more complicated.  There are two equal pairs $\lambda_{(4, 1, \pm)} = \lambda_{(4, 3, \pm)}$, for which

$$P_{1,1} = P_{3,3} = 2 \left ( \frac{2^{A/2}A + 2}{1 + 2^{A/2}} \right )$$

$$Q_{1,1} = Q_{3,3} = \frac{{\left(2^{\frac{1}{2}A+ 1} A + 4\right)} \left({\left(2^{\frac{1}{2} \, A} + 2^{A}\right)} 2^{-\frac{3}{2} \, A - 1}\right)^{\frac{2}{A}}}{1+ 2^{A/2}}$$
$$S_{1,1} = S_{3,3} = \frac{2 (A - 2) ((2^{A/2}+ 2^A) 2^{-3A/2 - 1})^{1/A}}{1 + 2^{-A/2}}$$

and with these in hand it is not difficult to show that $\lambda_{(4, 1, \pm)} = \lambda_{(4, 3, \pm)} > 0$ for all $A \ge 2$. 
	
Finally $\lambda_{(4,2,\pm)}$ is determined by 

$$P_{2,2} = \frac{4 A}{1 + 2^{A/2} }$$
$$Q_{2,2}  =\frac{2^{A/2 + 2} \left({\left(1+ 2^{A/2}\right)} 2^{- A - 1}\right)^{\frac{2}{A}} A}{1+ 2^{A/2}}$$
$$S_{2,2}  = 0$$
and since the diagonal entries of this block are always positive $\lambda_{(4,2,\pm)} > 0$ for all $A \ge 2$.

Thus in the quotient space $\mathcal{C}_4$  the eigenvalues are positive and the square is always a minimum of $f$.  The same conclusion was reached by Jersett \cite{Jersett2018} using different methods.  Under the direct isometry equivalence relation there are 6 distinct labelings of the square, so it has multiplicity 6.

The equilateral triangle with a mass at its center was also studied in this context by Jersett \cite{Jersett2018}.  It has eigenvalues $0, 4, 4, 4A$ and two pairs of eigenvalues

$$\lambda_{\pm} = \frac{6 + 3A + (5A-6)3^{A/2}}{3 + 3^{A/2}} \pm \frac{\sqrt{(16A^2 + 9(A-2)^2)3^A + 9(A-2)^2(1- 2\cdot 3^{A/2} )}}{3 + 3^{A/2}} $$
The pair of eigenvalues $\lambda_{-}$ are negative \cite{Jersett2018} for all $A\ge2$, so the Morse index of this configuration is always 2.  Any of the four masses can be at the center, and then there are only 2 distinct ways to label the outer triangle (under the orientation-preserving equivalence relation), so the equilateral triangle with a mass at its center has multiplicity 8.

The isosceles central configurations of the four-body problem, which have two pairs of equal mutual distances, are unfortunately much more complicated to analyze.  Its lack of rotational symmetry means it has multiplicity 24.

Using the Albouy-Chenciner equations \cite{albouy_probleme_1997, LMS} for the isosceles configuration, we excluded most of the mutual distance and $A$-parameter space using interval analysis.  After refining the parameter intervals, we then also used a method from \cite{Shary_2014} to prune interval sets which could not contain a bifurcation (i.e. where the Jacobian of our system must have maximal rank).  To summarize this method (Theorem 5 in \cite{Shary_2014}): for an interval matrix $A$ with entries $[\underline{a},\overline{a}]$, we define the midpoint and radius matrices $mid(A) = (\underline{A}+\overline{A})/2$ and $rad(A) = (\overline{A} - \underline{A})/2$.  Then any matrix with entries contained in the interval entries of $A$ has full rank if $\sigma_{max}(rad(A)) < \sigma_{min}(mid(A))$, where the $\sigma$ denote the singular values of the singular value decomposition of each matrix; as the inequality on singular values is an exact result, it needs to be strengthened slightly to account for the computational precision.

This interval arithmetic method worked very well for $A>5$, and sufficiently well to exclude bifurcations for $A>3$.  However, for $A<3$ it became prohibitively computationally expensive due to the bifurcation at $A=2$ where $f$ is no longer nondegenerate.  

Since we know that for $N=4$ the Poincar{\'e}  polynomial of the configuration space $\mathcal{C}_4$ is $P(t) = 1 + 5t + 6 t^2$ \cite{LMS}, the above analysis implies
\begin{theorem}
 For $A \ge 3$, the Morse polynomial of $f$ is 
$$M = P + (1+t)(5+14t) = 6 + 24 t + 20 t^2.$$
\end{theorem}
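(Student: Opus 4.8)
The plan is to read off the Morse polynomial from a complete census of the equal-mass four-body central configurations together with their multiplicities and Morse indices, and then to use the Morse relation $M(t)=P(t)+(1+t)R(t)$ as the final lever that determines the one index I cannot obtain in closed form. Throughout I take as given the Poincar\'e polynomial $P(t)=1+5t+6t^2$ of $\mathcal{C}_4$ \cite{LMS} and the non-degeneracy of $f$ for $A\ge 3$ (the case $A>3$ from the interval-analysis argument above, the case $A=3$ from the known Newtonian classification).

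First I would pin down the list of critical points. For $A=3$ this is Albouy's complete classification \cite{albouy_1996}, and for $A>3$ the interval computation shows that no further configurations appear and that $f$ does not degenerate; hence for all $A\ge 3$ the central configurations fall into exactly four geometric families: the square, the isosceles concave configuration, the equilateral triangle with a central mass, and the collinear configuration. Counting labelings modulo orientation-preserving isometries gives multiplicities $6$, $24$, $8$, and $12$ respectively (the square keeps only its four rotations, so $4!/4=6$; the isosceles shape has no proper rotational symmetry, so $4!=24$; the central-mass family is $4\times 2=8$; and the collinear shape is identified only with its $180^\circ$ reversal, so $4!/2=12$), for a total of $50$ critical points.

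Next I would gather the indices. The square is a minimum of index $0$, directly from the eigenvalue computation above in which $\lambda_{(4,0,+)}=4A$ and all the $\lambda_{(4,i,\pm)}$ are positive \cite{Jersett2018}. The equilateral triangle with a central mass has index $2$, from its explicit pair of negative eigenvalues $\lambda_-$. The collinear configuration has index $2$ as well: this is the four-body case of the classical fact, which I would generalize to the homogeneous potential, that a collinear central configuration of $N$ bodies is non-degenerate of Morse index $N-2$, the negative directions being the off-line perturbations. The isosceles configuration is the recalcitrant one, since the loss of rotational symmetry removes the circulant structure that produced closed-form eigenvalues; I therefore leave its index $b$ undetermined for the moment. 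It is important that the collinear index be fixed \emph{independently} here, because knowing only the square and the central-mass index, the Morse inequalities would still admit both a collinear minimum and a collinear index-$2$ saddle.

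Finally I would fix $b$ by Morse theory. With the three known families the polynomial is $M(t)=6+20t^2+24t^b$. The Euler-characteristic identity $M(-1)=P(-1)=2$ reads $6+20+24(-1)^b=2$, forcing $b$ odd; and non-degeneracy requires $M(t)-P(t)$ to be divisible by $1+t$ with a quotient $R(t)$ of non-negative integer coefficients. Every odd $b\ge 3$ makes some coefficient of $R(t)$ negative (one checks directly that $b=3$ gives $R(t)=24t^2-10t+5$, and the alternating expansion forces a negative coefficient for all larger odd $b$), so $b=1$. This yields $R(t)=5+14t$ and $M(t)=6+24t+20t^2$, matching the stated form $P+(1+t)(5+14t)$. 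The main obstacle is thus the input rather than the bookkeeping: the assertions that these four families exhaust the central configurations and that $f$ is everywhere non-degenerate for $A>3$ rest entirely on the interval-arithmetic exclusion of bifurcations, which grows delicate and expensive as $A$ decreases toward the degeneracy at $A=2$; a secondary point needing care is establishing that the collinear index equals $N-2$ for the full homogeneous potential and not merely in the Newtonian case.
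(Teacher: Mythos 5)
Your proposal is correct, and its inputs are exactly the paper's: completeness of the census (Albouy's classification at $A=3$, the interval-arithmetic exclusion of bifurcations and degeneracies for $A>3$), the multiplicities $6,24,8,12$, the explicit eigenvalue computations giving index $0$ for the square and index $2$ for the centered equilateral triangle, and the collinear theorem (which the paper does prove for all $A\in[2,\infty)$ by generalizing Conley's argument, so your flagged worry there is discharged elsewhere in the paper). Where you genuinely diverge is in how the isosceles index is fixed. The paper is terse ("the above analysis implies"), implicitly carrying the known Newtonian index $1$ for the isosceles family across $A>3$ via the no-bifurcation/no-degeneracy continuation; you instead derive $b=1$ purely from the Morse relation, and your bookkeeping checks out: $M(-1)=P(-1)=2$ forces $b$ odd, and writing $M(t)-P(t)=5-5t+14t^2+24t^b$ with $b$ odd gives $R(t)=14t-19+24\left(t^{b-1}-t^{b-2}+\cdots-t+1\right)$, whose coefficient of $t$ is $14-24=-10<0$ for every odd $b\ge 3$ (since $\dim\mathcal{C}_4=7$, only $b\in\{1,3,5,7\}$ need be checked, and your uniform argument covers them all), leaving $b=1$ and $R(t)=5+14t$ as the unique admissible solution. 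What your route buys is independence from the Newtonian isosceles index computation: you need only the three symmetric families' indices, and Morse theory does the rest. What it costs is that it is rigid --- it works here because exactly one index is unknown and the constraints happen to pin it uniquely; the paper's continuation argument would survive a census in which two families' indices were unknown. Your remark that the collinear index must be fixed independently (else a collinear minimum plus saddle ambiguity would survive the Morse constraints) is a correct and worthwhile observation that the paper does not make explicit.
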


All of our numerical analysis strongly supports the following conjecture (a slightly stronger version of a speculation in \cite{albouy_1996}), which we are unable to rigorously prove at this time:

\begin{conj}
 For $A > 2$, the Morse polynomial of $f$ is 
$$M = P + (1+t)(5+14t) = 6 + 24 t + 20 t^2.$$
\end{conj}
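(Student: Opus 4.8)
The plan is to show that the full set of equal-mass four-body central configurations, together with their Morse indices, remains constant across the entire interval $A \in (2,\infty)$, so that the value $M = 6 + 24t + 20t^2$ proved for $A \ge 3$ persists all the way down to, but not including, the vortex value $A=2$. As in the $A\ge 3$ theorem, the starting point is Albouy's symmetry result: for every $A \in [2,\infty)$ each planar equal-mass central configuration carries an axis of symmetry, which confines the search to four explicit families---the square, the equilateral triangle with a central mass, the collinear Moulton configurations, and the one-parameter isosceles concave family cut out by the Albouy--Chenciner equations. The square and the triangle-with-center are available in closed form, with Hessian spectra computed above (indices $0$ and $2$, multiplicities $6$ and $8$); the collinear configurations contribute index $2$ and multiplicity $12$ by the generalized collinear results; so the whole problem reduces to controlling the isosceles family (multiplicity $24$, conjecturally index $1$) and to excluding the birth of any new symmetric configuration as $A$ decreases from $3$ toward $2$.

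First I would run a continuation argument. At $A=3$ each of the four families consists of nondegenerate critical points of $f$---exactly what the interval-arithmetic certification behind the $A\ge 3$ theorem guarantees---so the implicit function theorem applied to $\nabla f = 0$ extends every branch to a smooth family of nondegenerate critical points, with locally constant Morse index, on a maximal subinterval of $(2,3]$ abutting $A=3$. Since the index can change only where the Hessian degenerates, it is enough to prove that on $(2,3]$ no branch meets a degeneracy and no additional branch appears. On any compact subinterval $[2+\varepsilon,3]$ I would re-run the scheme that already succeeds for $A>3$: bound the Albouy--Chenciner map over a box in the mutual-distance and $A$ parameters, invoke the Shary criterion $\sigma_{\max}(\mathrm{rad}) < \sigma_{\min}(\mathrm{mid})$ to certify full Jacobian rank and hence the absence of bifurcations, and discard every region not already containing a known solution.

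The genuinely hard part---and the reason the statement is only a conjecture---is the thin collar $A \in (2,2+\varepsilon]$. Here the Newtonian-side isosceles branch does \emph{not} continue smoothly to $A=2$: Albouy's classification shows that at $A=2$ the sole concave configuration is the equilateral triangle with a central mass, so the isosceles branch must merge into the equilateral-with-center family precisely at $A=2$, with each three-fold symmetric configuration shedding three isosceles configurations as $A$ increases past $2$ (matching $24 = 3\times 8$). Consequently the Hessian acquires a zero eigenvalue as $A \to 2^+$, the Albouy--Chenciner Jacobian drops rank, and the Shary full-rank test is guaranteed to fail on any box reaching $A=2$---exactly the behavior that made the interval computation prohibitively expensive near the vortex value. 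To bridge this collar I would replace the failed interval certification by a local symmetry-breaking bifurcation analysis at $A=2$: identify the one-dimensional kernel of the Hessian along the equilateral-with-center family, perform a Lyapunov--Schmidt reduction in that direction, and verify that the emerging isosceles branch lies on the side $A>2$ with the stated multiplicity and index $1$. Matching this local picture to the interval-certified range $[2+\varepsilon,3]$, together with the invariance of the index sum along each nondegenerate branch, would then reproduce $M = 6 + 24t + 20t^2$ for all $A>2$. Pinning down that degenerate direction and the sign of the bifurcation as $A$ crosses $2$ is the step I expect to be the crux.
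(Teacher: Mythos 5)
The statement you were asked to prove is a conjecture, not a theorem: the paper says explicitly that ``all of our numerical analysis strongly supports the following conjecture \ldots which we are unable to rigorously prove at this time.'' So there is no proof in the paper to compare against, and your proposal must be judged as a research program; as such it correctly reconstructs the architecture of the $A \ge 3$ theorem (Albouy's symmetry reduction to axially symmetric configurations, the Albouy--Chenciner equations, interval exclusion plus the Shary full-rank criterion) and correctly locates the obstruction at the $A=2$ degeneracy. But it is not a proof, and not only because you concede the crux yourself. The first genuine gap is the matching of regimes: the paper reports that the interval/Shary certification ``became prohibitively computationally expensive'' already for $A<3$ --- not merely in a thin collar at $A=2$ --- so ``re-run the scheme on $[2+\varepsilon,3]$'' is precisely the computation that did not succeed. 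Meanwhile, a local bifurcation analysis at $A=2$ controls only an unquantified neighborhood $(2,2+\delta)$, and nothing in your argument provides an effective lower bound on $\delta$ or guarantees $\delta \ge \varepsilon$; without that, the interval-certified region and the perturbative collar need not meet, and the continuation argument has a hole in the middle.

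The second gap is that your local analysis at $A=2$ is set up with the wrong kernel. The equilateral triangle with a central mass has $D_3$ symmetry, and your own multiplicity count ($24 = 3\times 8$, three isosceles configurations shed by each symmetric one) signals a symmetry-breaking bifurcation through a two-dimensional irreducible representation of $D_3$: the degenerate subspace, beyond the ever-present rotational zero mode, should be two-dimensional, so a Lyapunov--Schmidt reduction onto a one-dimensional kernel cannot capture it. You would need equivariant bifurcation theory (e.g., the equivariant branching lemma) to produce the three branches, determine that they lie on the side $A>2$, and compute their Morse index --- and even then you would still have to exclude further branches from the same kernel and rule out entirely new symmetric families entering through the collision or infinity boundary of the configuration space for $A \in (2,3)$, which interval boxes control only given a priori compactness estimates that neither you nor the paper supplies on this range. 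These are exactly the missing ingredients that keep the statement a conjecture rather than a theorem.
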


 For our purposes, the case of four equal masses is a somewhat special case in that we believe the Morse indices of the critical points do not change in the interval $(2,\infty)$, although there is a degeneracy for $A=2$.   We will see below that for $N>4$ there are bifurcations as $A$ is varied.

\subsection{The five-body problem}

Much less is known about 5-body central configurations in general compared to $N=4$.  In the Newtonian case, the earliest systematic attempt was by Williams \cite{williams_permanent_1938}, who attempted to extend the approach that MacMillan and Bartky \cite{MacMillanBartky32} pioneered for $N=4$ on convex configurations for general (not necessarily equal) masses; the work of Williams was later improved by Chen and Hsiao \cite{chen2018strictly}.  There are limited results on configurations with particular symmetries \cite{hampton_stacked_2005, llibre2008new,hampton_finiteness_2010, gidea2010symmetric, llibre2011new, cornelio2017family}.  Albouy and Kaloshin proved that there are finitely many five-body central configurations in the Newtonian case, apart from some exceptional cases determined by polynomials in the mass parameters for which the result is unknown \cite{albouykaloshin}.

For equal masses the central configurations of the five-body problem in the Newtonian case was completely characterized with a homotopy continuation method in \cite{LeeSantoprete2009}.  We can use our formula for the eigenvalues of the Hessian of $f$ to compute the Morse index of the regular pentagon.  Then using numerical results we speculate on the complete Morse structure of the problem for $A \in [2, \infty)$.  

The central configurations for $A=3$, $A=7$, and $A=20$ are shown in Figure \ref{fivebodyccs}.

\begin{figure}

\includegraphics[width=4.5in]{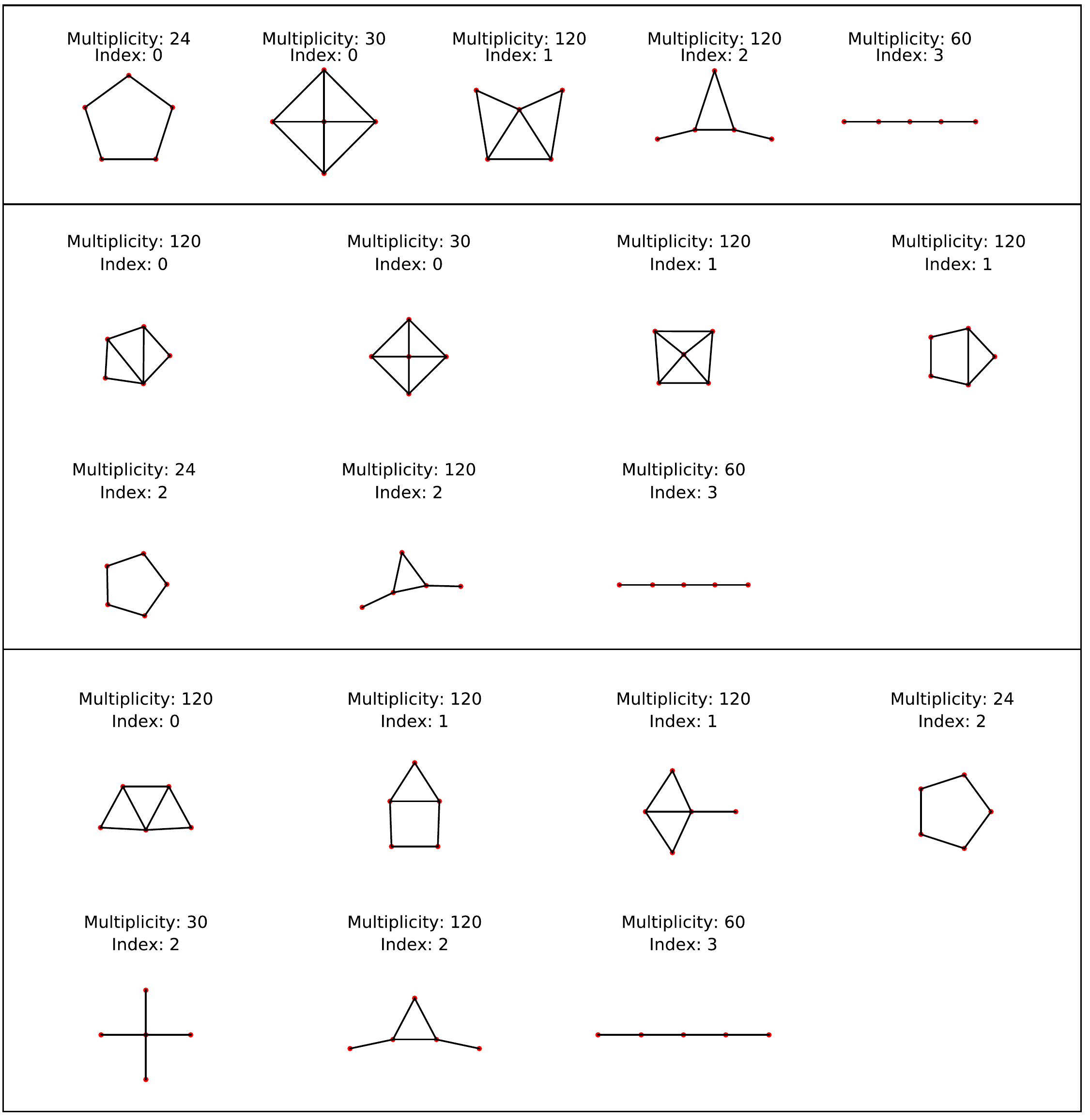}

\caption{Central configurations of the five-body problem for $A=3$, $A=7$, and $A=20$.}
\label{fivebodyccs}
\end{figure}

The Hessian of $f$ for the regular pentagon has a bifurcation for some  $A \in (6.755, 6.756)$.  As $A$ increases through this bifurcation value, the regular pentagon goes from having Morse index 0 to Morse index 2, and two new central configurations are created.  The first new configuration has index 0, and as $A$ increases its shape becomes close to being three equilateral triangles packed in a row (see Figure \ref{fivebodyccs}).  The second new configuration has Morse index 1, and its shape approaches a square topped by an equilateral triangle.  

An interesting bifurcation occurs at  $A \approx 7.5637$.  For $A$ below this bifurcation value, there are index-1 central configurations which are symmetric trapezoids with a fifth mass symmetrically placed in the interior of the trapezoid, and the symmetric cross has index 0.  At the bifurcation the trapezoid becomes a square, and the symmetric cross becomes degenerate.  After the bifurcation (for larger values of $A$) the symmetric cross has index 2, and instead of symmetric trapezoids there are symmetric concave kites with index 1.

We summarize our numerical results by the following conjecture 

\begin{conj}
There are unique values $A_5 \in (6.755, 6.756)$ and $A_c \in ( 7.5636,  7.5638)$ such that for $2 \le A < A_5$, the Morse polynomial of $f$ on $\mathcal{C}_5$  is
$$M(t) = 54 + 120 t + 120 t^2 + 60 t^3 = P(t) + (1+t) (53 + 58 t + 36 t^2)$$
for $A_5 < A < A_c$:
$$M(t) = 150 + 240 t + 144 t^2 + 60 t^3 = P(t) + (1+t) (149 + 82 t + 36 t^2)$$
and finally for $A_c < A$:
$$M(t) = 120 + 240 t + 174 t^2 + 60 t^3 = P(t) + (1+t)(119 + 112 t + 36 t^2)$$
\end{conj}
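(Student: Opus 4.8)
The statement combines a global classification of all equal-mass five-body central configurations with the determination of their Morse indices as the exponent $A$ varies, so the natural framework is a parametrized, equivariant Morse/bifurcation analysis of $f$ on $\mathcal{C}_5$. The plan is to view the critical set of $f$ inside the product $\mathcal{C}_5 \times (2,\infty)$ as a one-parameter bifurcation diagram. Because the masses are equal, $f$ is invariant under the group generated by the relabelings $S_5$ and the reflections in $O(2)/SO(2)$ (recall $\mathcal{C}_5$ quotients only by \emph{proper} rotations), and this symmetry is exactly what fixes the multiplicities recorded in the Morse polynomials. At any $A$ for which every critical point of $f$ is nondegenerate, the implicit function theorem produces smooth branches $A \mapsto x(A)$ of central configurations along which the index of $D^2 f$ is locally constant; hence $M(t)$ is locally constant in $A$ and can change only at a \emph{degenerate parameter value}, where $\det D^2 f = 0$ at some critical point. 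The whole problem thus reduces to (i) anchoring a complete enumeration at one value of $A$ in each of the three ranges, (ii) proving that $A_5$ and $A_c$ are the only degenerate parameter values, and (iii) resolving the local bifurcation at each of them.

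For the anchoring I would fix the first range at $A=3$ using the complete classification of the equal-mass Newtonian five-body central configurations of Lee and Santoprete \cite{LeeSantoprete2009}, computing the index of each critical point from $D^2 f$ (the explicit block formulae for the symmetric families, numerical or symbolic evaluation otherwise); since $3 < A_5$ this should reproduce $M(t)=54+120t+120t^2+60t^3$. The third range is best anchored in the $A\to\infty$ limit advocated in the introduction, where the configurations degenerate to rigid triangular-lattice packings (the three-triangle strip, the square-plus-triangle, the symmetric cross and the concave kite visible in Figure \ref{fivebodyccs}); enumerating these limiting packings and continuing down to just above $A_c$ yields $M(t)=120+240t+174t^2+60t^3$. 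The multiplicities follow from orbit--stabilizer on $\mathcal{C}_5$: a shape whose only nontrivial self-isometry is an orientation-reversing reflection has trivial stabilizer in $S_5$ and hence multiplicity $5!=120$ (its two chiralities being identified by a relabeling), the regular pentagon retains its cyclic stabilizer $C_5$ and keeps multiplicity $24$, and the symmetric cross (a square with a central mass) retains $C_4$ and has multiplicity $30$.

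For the two bifurcations, $A_5$ is exactly the degeneracy of the regular pentagon: from the block formulae (\ref{Pii}), (\ref{Qii}), (\ref{Sii}) one checks that the doubled eigenvalue $\lambda_{(5,2,-)}=\lambda_{(5,3,-)}$ is the unique eigenvalue crossing zero, and an equivariant Lyapunov--Schmidt reduction on the two-dimensional kernel carrying the $k=2$ Fourier mode, respecting the dihedral symmetry $D_5$ of the pentagon, identifies a symmetry-breaking pitchfork in which the pentagon's index jumps $0\to 2$ while two branches of new central configurations (one of index $0$, one of index $1$) emerge, each of multiplicity $120$. This accounts precisely for the passage $54\to150$ in the minima, $120\to240$ in index $1$, and $120\to144$ in index $2$. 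At $A_c$ I expect an exchange-of-stability (transcritical) bifurcation among the symmetric families: the symmetric cross becomes degenerate and its index changes $0\to 2$, while the index-$1$ trapezoid family degenerates to a square and re-emerges as an index-$1$ concave-kite family. Only the cross contributes to the change in $M(t)$, and with its multiplicity $30$ this gives the passage $150\to120$ in the minima and $144\to174$ in index $2$, leaving index $1$ and index $3$ unchanged.

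The hard part is step (ii): proving that $A_5$ and $A_c$ are the \emph{only} degenerate parameter values in $(2,\infty)$, that each is a single transversal crossing, and that the enumeration is globally complete. I would control this exactly as the four-body case is controlled in this paper: Shub's compactification \cite{shub1971appendix} keeps the gradient of $f$ outward-pointing near the collision locus $\Delta$, and the coercive term $\frac{MI}{2}$ (with the scale already fixed) prevents escape to infinity, so for $A$ in any compact subinterval the critical set stays in a fixed compact region and no branches are created or destroyed at the boundary; then a computer-assisted interval sweep of the Albouy--Chenciner equations \cite{albouy_probleme_1997, LMS}, pruned with the Shary full-rank test \cite{Shary_2014}, would verify both the completeness of the count and the nonsingularity of the Jacobian away from small enclosures of $A_5$ and $A_c$. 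The serious obstacles are that these interval methods degenerate as $A\to 2^+$ (precisely the $A=2$ degeneracy already noted for the four-body problem), and that the uniqueness and transversality of the non-symmetric crossing at $A_c$ appear to require a monotonicity-in-$A$ statement for the relevant eigenvalue that I do not expect to follow in closed form (the pentagon crossing at $A_5$, being governed by the explicit formulae (\ref{Pii})--(\ref{Sii}), is a tractable single-variable problem by comparison). In each range one should at least verify the Morse relation $M(t)=P(t)+(1+t)R(t)$ with $R$ having non-negative coefficients, as a necessary consistency check that no critical point has been overlooked.
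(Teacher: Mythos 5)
The statement you have set out to prove is, in the paper itself, explicitly a \emph{conjecture}: the author presents it only as a summary of numerical experiments, supported by the explicit eigenvalue formulae (\ref{Pii})--(\ref{Sii}) for the regular pentagon and by a qualitative description of the two bifurcations, and offers no proof. Judged against that, your proposal is a faithful --- indeed somewhat more detailed --- reconstruction of the evidence the paper actually has: anchoring at $A=3$ via the Lee--Santoprete classification \cite{LeeSantoprete2009}, locating $A_5$ as the zero of the doubled eigenvalue $\lambda_{(5,2,-)}=\lambda_{(5,3,-)}$ of the pentagon's Hessian (consistent with the paper's remark that $\lambda_{(N,2,-)}$ is always the ``interesting'' eigenvalue), the trapezoid--square--kite exchange through the degenerate symmetric cross at $A_c$, and an interval sweep of the Albouy--Chenciner equations pruned by the Shary full-rank test \cite{Shary_2014} --- the last being exactly the machinery the paper deploys for the four-body isosceles family. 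Your orbit--stabilizer bookkeeping is correct and is a genuine consistency check the paper leaves implicit: the pentagon (multiplicity $24$) moving from index $0$ to index $2$ together with two new branches of multiplicity $120$ in indices $0$ and $1$ accounts exactly for $54\to150$, $120\to240$, $120\to144$; the cross (multiplicity $30$) moving from index $0$ to index $2$ accounts for $150\to120$ and $144\to174$, with index $1$ preserved because the trapezoid family (multiplicity $120$) re-emerges as the kite family.

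The genuine gap is the one you identify yourself in step (ii), and it is precisely why the paper states this as a conjecture rather than a theorem: neither your plan nor the paper establishes completeness of the enumeration in each range, uniqueness of the degenerate parameter values $A_5$ and $A_c$, or transversality of the crossings. Two of your proposed repairs are weaker than they appear. First, the interval-sweep component is proposed but not executed, and the paper's own four-body experience warns that it breaks down exactly where your first Morse polynomial needs it: the method became ``prohibitively computationally expensive'' for $A<3$ because of the degeneracy at $A=2$, yet you assert the polynomial $54+120t+120t^2+60t^3$ on all of $[2,A_5)$, so the sweep would have to penetrate that regime or be replaced by a separate perturbative argument off the degenerate $A=2$ structure. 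Second, your $A\to\infty$ anchoring for the third range presupposes the limiting combinatorial framework for central configurations that the paper lists only as a future direction; no existence-and-continuation theorem from limiting nearest-neighbor packings down to finite $A>A_c$ is currently available, so that anchor is itself conjectural. In short: your scaffolding is sound, your local bifurcation analysis at $A_5$ is genuinely tractable from the closed-form block formulae, and your accounting matches everything the paper reports, but the load-bearing global steps are named rather than supplied --- which leaves the statement exactly where the paper leaves it.
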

(For the 5-body problem the Poincar{\'e} polynomial for the reduced configuration space is $P(t) = 1 + 9 t + 26 t^2 + 24 t^3$.)

\subsection{The six-body problem}

In Figure \ref{sixbodyccs} we show central configurations of the six-body problem for $A=3$ and $A=20$.  

\begin{figure}[h!t]
\includegraphics[width=5in]{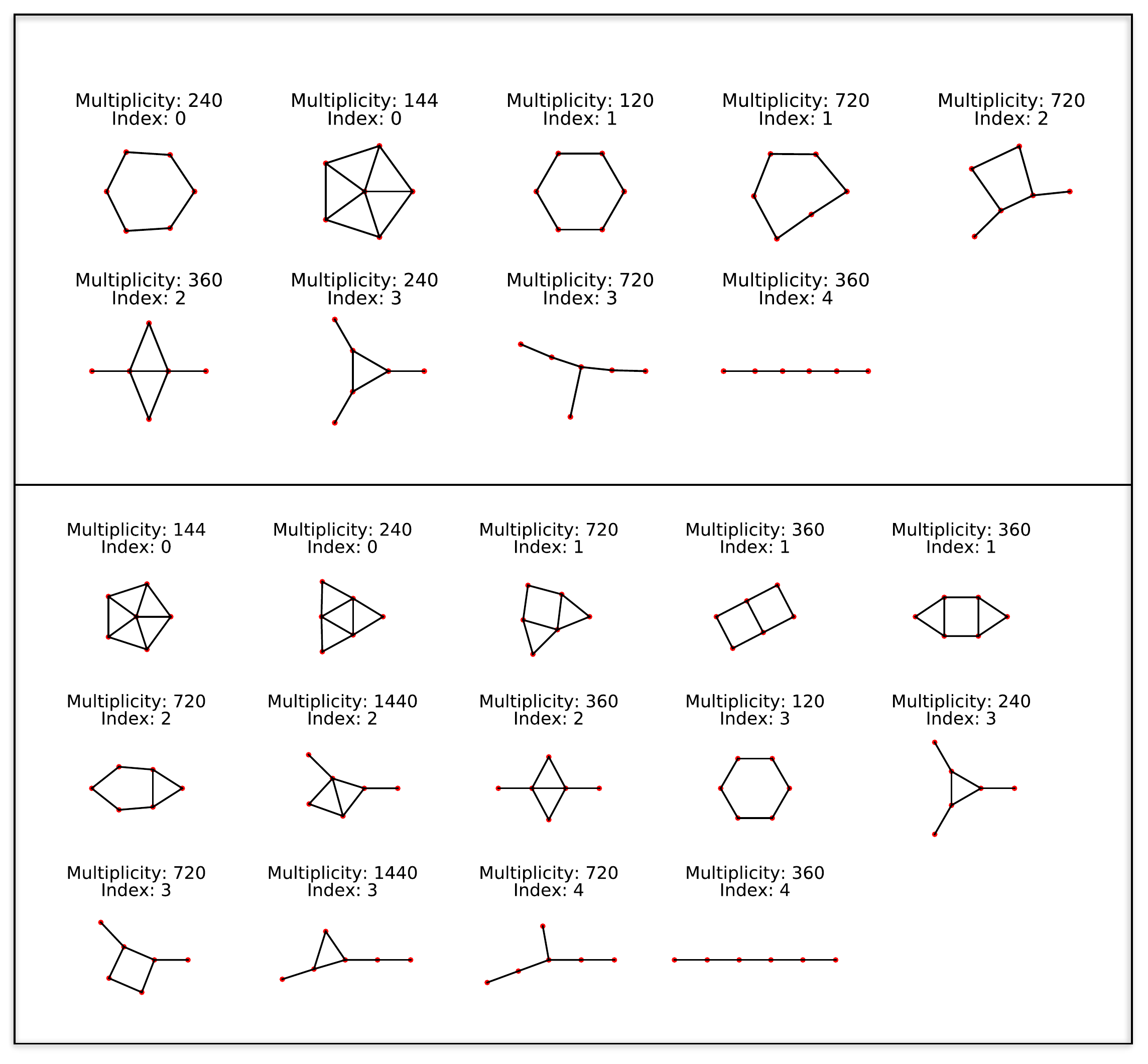}
\caption{Conjectured central configurations of the six-body problem for $A=3$ and $A=20$.}
\label{sixbodyccs}
\end{figure}

The Newtonian configuration close in shape to the regular hexagon is a twisted crown; the existence and uniqueness of the relative equilbria with this type of symmetry has been studied in some detail \cite{moeckel1995bifurcation, yu2012twisted, BarrabasCors}.

In the equal-mass case for $A=2$ and $A=3$, it seems from several numerical experiments that the first time a central configuration without any symmetry appears is $N=8$ \cite{aref1998point,ferrario2002central}; in this context it is interesting that as $A$ increases several asymmetric configurations are created from bifurcations already for $N=6$.  

Let us consider the asymmetric index-2 central configuration, the seventh in Figure (\ref{sixbodyccs}), as a case study in what a theory of central configurations for the limit $A \rightarrow \infty$ might look like.  Our choice of $f$ was motivated partly by the desire that in the limit $A \rightarrow \infty$, the nearest-neighbor distance would approach $1$.  The limiting configuration in question would then be a rhombus composed of equilateral triangles with two masses attached to a single edge.  For large $A$, these masses only effectively interact with their single nearest neighbor.  Assuming that the core rhombus is robust to small perturbations, we need only determine positions for the single-edge masses so that their single interaction direction is parallel to their position (i.e. pointing towards the center of mass).  Denote the rhombus positions by $q_1, \ldots, q_4$, and 
assume $q_5$ only interacts with $q_1$, and $q_6$ with $q_2$, so that $q_5 = q_1 + e^{\mathbb{I}\theta_1}$ and $q_6 = q_2 + e^{\mathbb{I}\theta_2}$ .  Then (assuming equal masses) the additional constraints on this limit configuration are
$$\sum_{i=1}^{6}q_i = 0$$
$$  \mu_1 e^{\mathbb{I}\theta_1} = q_5, \ \ \  \mu_2 e^{\mathbb{I}\theta_2} = q_6$$
where the $\mu_i$ and $\theta_i$ are real.  This can be converted into a polynomial system with $q_i = (x_i, y_i)$, which we solved with computer assistance by computing a Gr\"{o}bner basis using Singular \cite{DGPS} within Sage \cite{sagemath}.  Although these equations are much simpler than those of a central configuration for finite $A$, we were somewhat surprised that they require finding roots of sixth-degree polynomials; for example, the position $y_1$ is a root of 
$$ 11583  + 44505 y_{1} - 9238 y_{1}^{2}   - 71696 y_{1}^{3} + 52212 y_{1}^{4} - 21692 y_{1}^{5} + 7448 y_{1}^{6} =0$$
with $y_1 \approx = 1.33$.

For the six-body problem the Poincar{\'e} polynomial is $P(t) = 1 + 14 t + 71 t^2 + 154 t^3 + 120 t^4$, and corresponding to Figure (\ref{sixbodyccs}) we have the following conjectures:
\begin{conj}
For $6$ bodies, for sufficiently large $A$,
$$M(t) = 384 + 1440 t + 2520 t^2 + 2520 t^3 + 1080 t^4  = P(t) + (1+t)(383+1043 t+1406 t^2+960 t^3)$$
and for $A=3$
$$M(t) = 384 + 840 t + 1080 t^2 + 960 t^3 + 360 t^4   = P(t) + (1+t)(383 + 443 t + 566 t^2 + 240 t^3)$$
\end{conj}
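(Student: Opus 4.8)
The plan is to establish this conjecture the same way the four-body Morse polynomial was obtained earlier in this section: by a complete, certified enumeration of all equal-mass six-body central configurations together with a computation of the Morse index of $f$ at each one. First I would set up the Albouy--Chenciner equations for $N=6$ with all masses equal, treating $A$ as a parameter, and reduce the number of unknowns by quotienting out the direct isometries. Since the target is only the sign pattern of the Hessian rather than the exact eigenvalues, at each solution I would form the full Hessian of $f$ at the configuration and count its negative eigenvalues on $\mathcal{C}_6$, discarding the single zero rotational mode. For configurations with rotational or reflective symmetry (the twisted crown close to the hexagon, and the symmetric families visible in Figure \ref{sixbodyccs}) I would exploit the block structure exactly as in Section \ref{regpolysec}, so that the $12\times 12$ Hessian decouples and the index reduces to counting negative entries of small blocks.

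The enumeration itself is the crux. I would first organise the search by symmetry type, since the symmetric configurations lie on lower-dimensional strata cut out by additional linear relations among the coordinates and can often be found exactly (as was done for the $A\to\infty$ asymmetric configuration sketched above, whose coordinates satisfy the displayed sextic). For the fully asymmetric configurations---which, as noted, first appear at $N=6$ as $A$ grows---I would combine numerical homotopy continuation to locate candidate solutions with the interval-arithmetic certification of \cite{Shary_2014} used earlier for $N=4$: cover the relevant region of mutual-distance and $A$ space by boxes, and certify on each box either that it contains no solution or that it contains exactly one nondegenerate solution of a definite index. Multiplying each equivalence class by its orbit size under the relabeling and orientation-preserving isometry group then yields the coefficients $n_j$.

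For the \emph{sufficiently large $A$} half of the statement I would first treat the limit $A\to\infty$. In that limit each body effectively interacts only with its nearest neighbours, so the defining equations degenerate to the algebraic nearest-neighbour system illustrated in the hexagon discussion; solving that finite system classifies the limiting shapes, and the implicit function theorem (using that each limiting critical point is nondegenerate, i.e. its reduced Hessian is invertible) extends each limiting shape to a branch persisting for all $A$ above some threshold, with constant index. A convenient global consistency check at every stage is the Euler-characteristic identity $M(-1)=P(-1)=\chi(\mathcal{C}_6)=24$, which both proposed polynomials already satisfy, together with the requirement that the remainder $R(t)$ in $M=P+(1+t)R$ have non-negative coefficients.

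The main obstacle is proving \emph{completeness} of the enumeration: certifying that no central configuration has been missed is exactly the (still open) finiteness problem for six equal masses, and the interval method became prohibitively expensive already near the $A=2$ degeneracy for $N=4$, so one should expect it to be far heavier here. Even granting finiteness at each fixed $A$, a rigorous proof must also control the bifurcation set in $A$---locating every value at which an index changes or a pair of configurations is created or annihilated---so that the count is provably constant on the open $A$-intervals in question. It is this global control over the one-parameter family, rather than any single Hessian computation, that keeps the statement at the level of a conjecture.
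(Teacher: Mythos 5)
This statement is a conjecture in the paper, and the paper offers no proof of it: its support is a numerical enumeration of the equal-mass six-body central configurations (the configurations pictured for $A=3$ and $A=20$), Morse index computations at each, the Morse-theoretic consistency requirement that $M(t)-P(t)$ be $(1+t)$ times a polynomial with non-negative coefficients, and agreement with Ferrario's independent Newtonian counts. Your plan reproduces exactly this methodology --- Albouy--Chenciner equations, symmetry-adapted block diagonalization of the Hessian as in the regular polygon section, homotopy continuation plus the interval certification used for $N=4$, and the $M(-1)=P(-1)=24$ check --- and, importantly, you correctly diagnose why it cannot currently be promoted to a theorem: completeness of the enumeration is equivalent to the open finiteness problem for six equal masses, and one must additionally control the bifurcation set in $A$ to know the count is constant on the relevant intervals. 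The one place you go beyond what the paper records is the treatment of the ``sufficiently large $A$'' half via the $A\to\infty$ nearest-neighbor system plus the implicit function theorem; the paper only solves one asymmetric example this way and lists the general $A\to\infty$ framework as a future direction. That route is promising but has its own gap worth naming: continuation from the limit gives existence and constancy of index for each limiting branch, but it does not by itself give completeness for large finite $A$ --- one must also rule out families of central configurations whose shapes degenerate or whose nearest-neighbor interaction graph fails to stabilize as $A\to\infty$, and the convergence of the finite-$A$ equations to the limit system needs uniform estimates rather than pointwise ones. So your proposal is a faithful reconstruction of the paper's evidence together with an honest account of the obstructions, and your conclusion that the statement remains a conjecture matches the paper's own assessment.
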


Support for this comes from the independent investigations of Ferrario \cite{ferrario2002central}, who found consistent sets of central configurations in the Newtonian case using a fixed-point method for $N \in \{6,7,8,9\}$.

\subsection{The \{7, 8, 9\}-body problems}

For larger $N$, it becomes difficult to find all of the equal mass central configurations for large $A$.  For the Newtonian case we have the following conjectures which agree with the numerical results of Ferrario \cite{ferrario2002central} (apart from what may be a typo: the 26th central configuration of the 9-body problem listed by Ferrario should have isotropy 1, rather than the $\frac{1}{2}$ stated there, corresponding to a multiplicity of $9!$ rather than $2\cdot 9!$).  In the vortex case ($A=2$) there appear to be exactly 12 central configurations \cite{faugere2012solving}, so at least one bifurcations occur between $A=2$ and $A=3$.

\begin{conj}
The Morse polynomials of $f$ for $A=3$ are
$$\text{for } N=7, \ \ M(t) = 120(7 + 84 t + 132 t^{2} + 105 t^{3} + 84 t^{4} + 35 t^{5})$$
$$\text{for } N=8, \ \ M(t) = 720(8 + 56 t + 224 t^{2} + 301 t^{3} + 210 t^{4} + 112 t^{5} + 28 t^{6})$$
$$\text{for } N=9, \ \ M(t) = 5040(81 + 216 t + 384 t^{2} + 732 t^{3} + 746 t^{4} + 396 t^{5} + 168 t^{6} + 36 t^{7})$$
\end{conj}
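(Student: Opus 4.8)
The plan is to read off $M(t)$ as a weighted enumeration of the critical points of $f$. Each equal-mass central configuration $c$ (an orientation class of shapes, with the two chiralities counted separately) contributes a term $\frac{N!}{|G_c^{+}|}\, t^{\mu(c)}$, where $\mu(c)$ is its Morse index and $G_c^{+}$ is the group of relabelings of $c$ realizable by an orientation-preserving isometry, so that
$$M(t) = \sum_{c} \frac{N!}{|G_c^{+}|}\, t^{\mu(c)}.$$
The common factor $(N-2)!$ appearing in the conjectured polynomials reflects the fact that the symmetry orders $|G_c^{+}|$ occurring here all divide $N(N-1)$ (e.g. the hexagon-plus-center has $|G_c^{+}|=6$ for $N=7$, and the regular $N$-gon has $|G_c^{+}|=N$). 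A proof therefore splits into three tasks: (i) produce the complete list of central configurations at $A=3$; (ii) determine the index $\mu(c)$ of each; and (iii) determine each symmetry group $G_c^{+}$. Given the list, tasks (ii) and (iii) are mechanical, so I would carry them out first.

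For the index I would evaluate the Hessian $D^2 f$ at each configuration using the explicit second-derivative formulas of Section \ref{regpolysec}, restrict to the center-of-mass-zero subspace, quotient out the one-dimensional rotational gauge direction, and count negative eigenvalues. Away from the regular polygon the circulant structure is lost, so rather than the block formulas \eqref{Pii}--\eqref{Sii} I would assemble the full Hessian at each critical point and certify the signs of its eigenvalues with validated interval arithmetic; the group $G_c^{+}$ is read directly off the configuration. As a global consistency check I would verify the Morse relation $M(t) = P_N(t) + (1+t)R(t)$ with $R$ having nonnegative integer coefficients, where $P_N(t) = \prod_{j=2}^{N-1}(1+jt)$ is the Poincar\'e polynomial already used for $N \le 6$ in this section. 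All three conjectured polynomials pass the Euler-characteristic test $M_N(-1) = P_N(-1) = (-1)^{N-2}(N-2)!$; for $N=7$ the full relation holds with $R(t) = 3480 t^{4} + 5556 t^{3} + 6464 t^{2} + 9221 t + 839$, and $N=8,9$ are analogous.

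The difficulty, and the reason this is stated as a conjecture rather than a theorem, is task (i): proving that the list of central configurations is complete. For $A=3$ and $N \in \{7,8,9\}$ no rigorous finiteness or classification result is available, and the candidate lists come from the numerical searches of Ferrario \cite{ferrario2002central} together with our own computations. The realistic route to rigor is the one used above for $N=4$: set up the Albouy--Chenciner equations \cite{albouy_probleme_1997}, exhaust the mutual-distance parameter region with interval analysis, and prune boxes using the full-rank criterion of \cite{Shary_2014} so that each surviving box provably contains exactly one configuration. I expect this to be the genuine obstruction. The number of configurations and the dimension of the Albouy--Chenciner variety grow rapidly with $N$, the interval search becomes prohibitively expensive (as already noted for $N=4$ near the $A=2$ degeneracy), and without an a priori bound on the number of central configurations one cannot certify that the search has missed nothing. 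Closing this gap for $N=7,8,9$ is essentially as hard as the corresponding finiteness problem and lies beyond the present methods, so any theorem would have to be stated conditionally on completeness of the enumeration.
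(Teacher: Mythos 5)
This statement is a conjecture in the paper, not a theorem: the paper offers no proof, only agreement with Ferrario's numerical enumeration (together with one corrected multiplicity for the $N=9$ case), and your proposal --- enumerate the configurations numerically, certify Hessian indices, count multiplicities as $N!/|G_c^{+}|$, check the Morse relation against $P_N(t)=\prod_{j=2}^{N-1}(1+jt)$, and identify completeness of the enumeration as the genuinely open obstruction tied to the finiteness problem --- matches the paper's implicit methodology exactly. Your consistency computations also check out: $P_7(t)=1+20t+155t^2+580t^3+1044t^4+720t^5$, so that $M_7(t)-P_7(t)=(1+t)\left(839+9221t+6464t^2+5556t^3+3480t^4\right)$ with nonnegative coefficients as you state, and $M_N(-1)=(-1)^{N-2}(N-2)!$ holds for all three conjectured polynomials.
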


These conjectured central configurations are pictured in Figures (\ref{sevenbodyccs}), (\ref{eightbodyccs}), and (\ref{ninebodyccs}).

\begin{figure}
\includegraphics[width=5in]{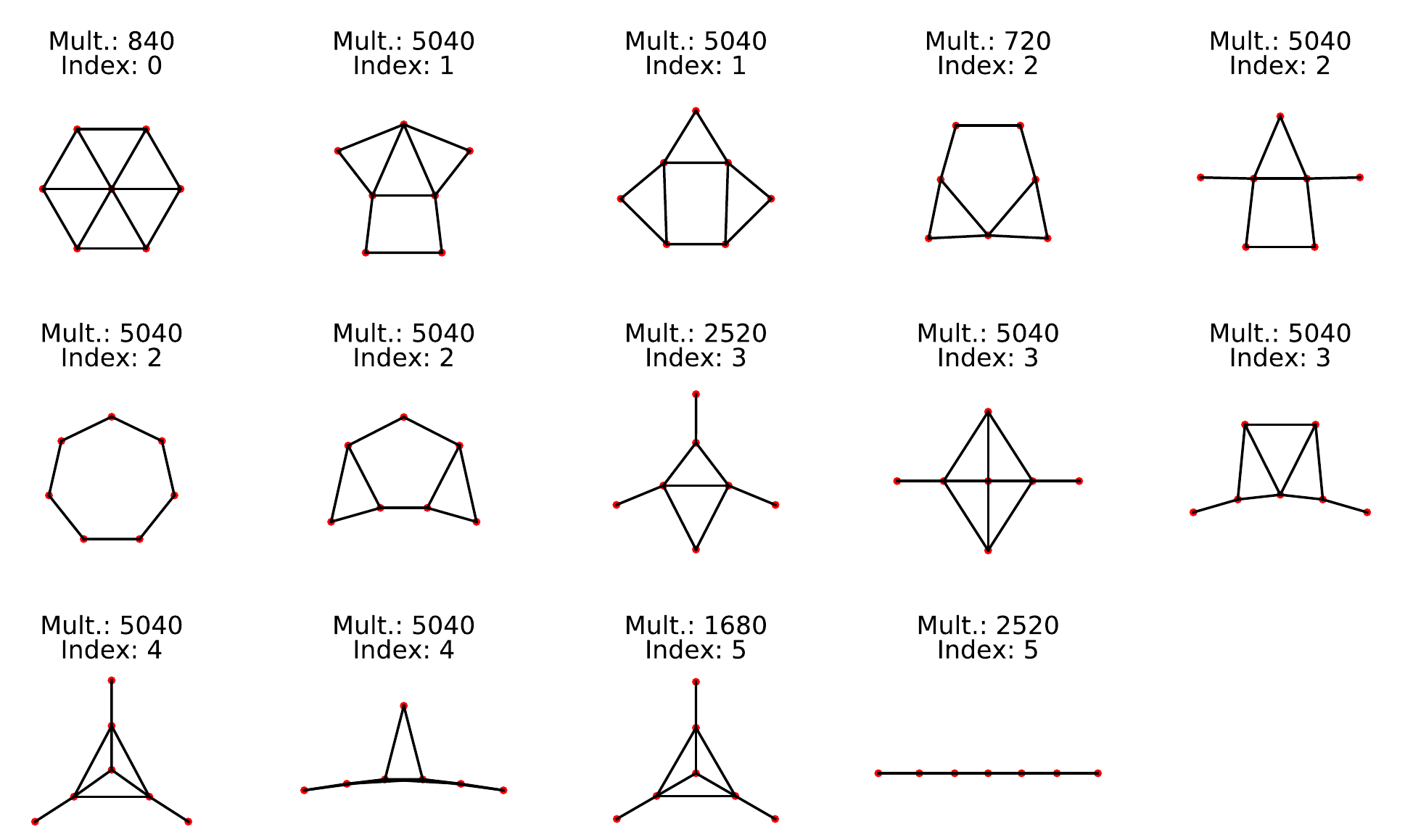}
\caption{Conjectured central configurations of the equal-mass seven-body problem for $A=3$, with Morse indices and multiplicities.}
\label{sevenbodyccs}
\end{figure}

\begin{figure}
\includegraphics[width=5in]{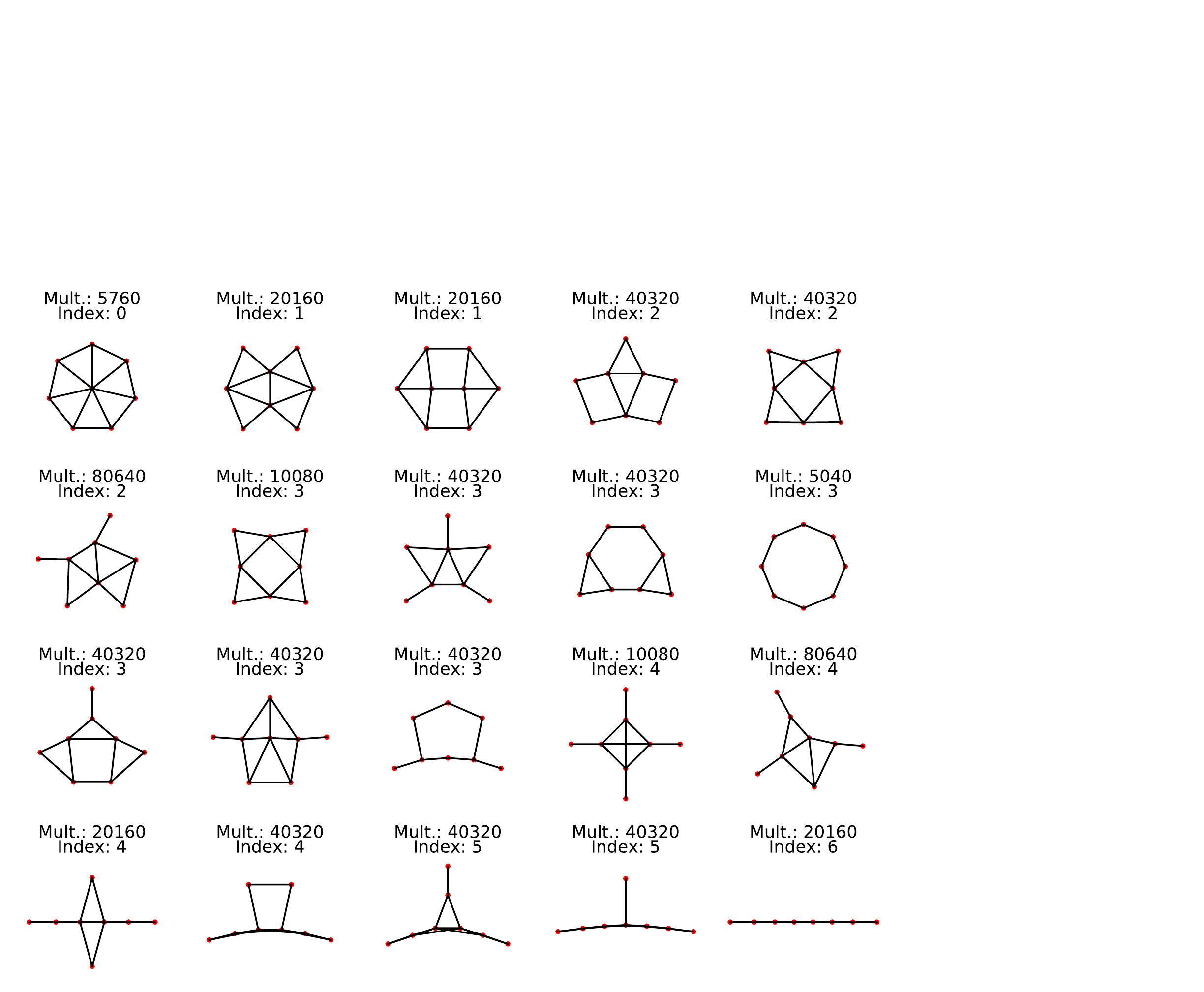}
\caption{Conjectured central configurations of the equal-mass eight-body problem for $A=3$, with Morse indices and multiplicities.}
\label{eightbodyccs}
\end{figure}

\begin{figure}
\includegraphics[width=5in]{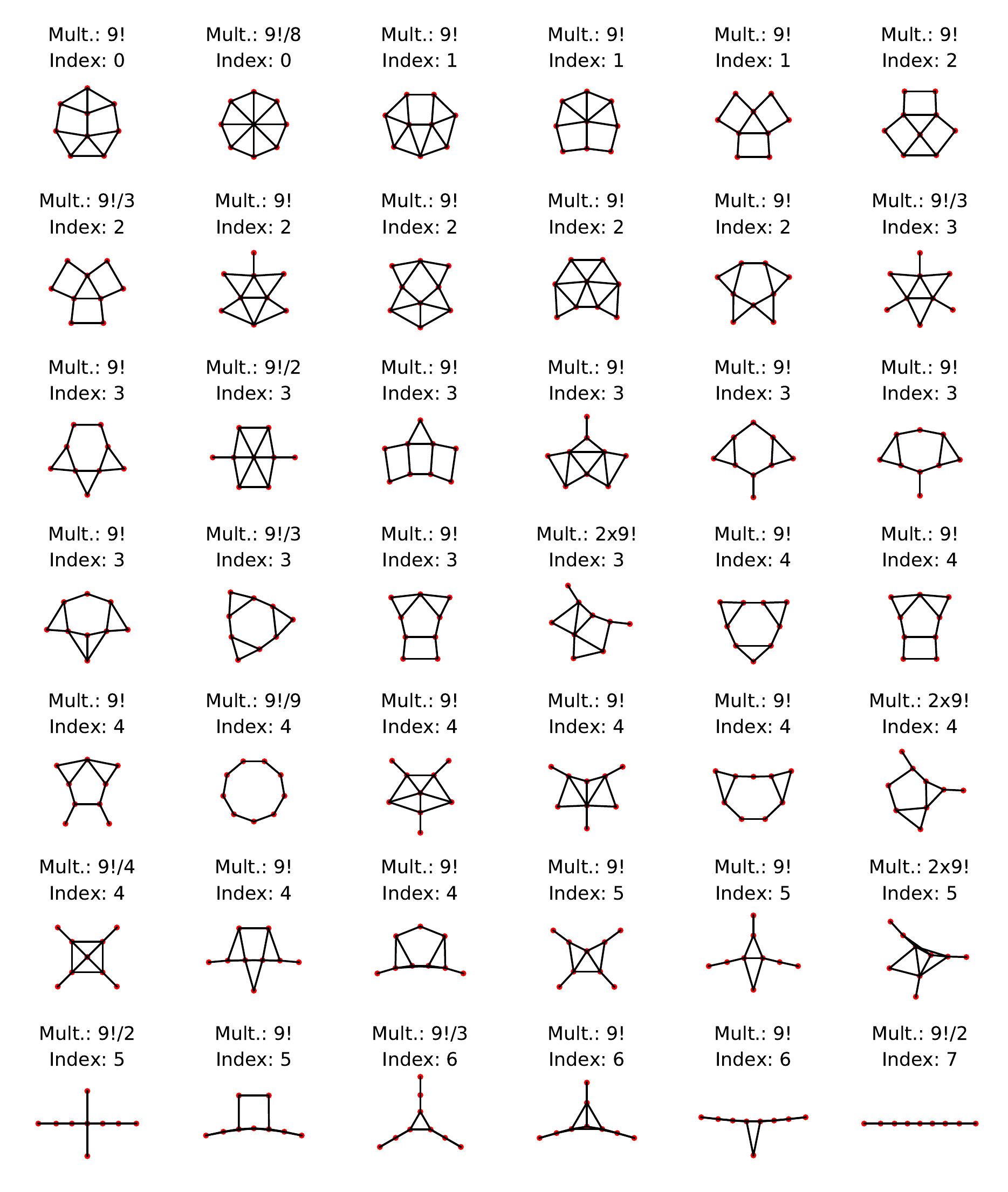}
\caption{Conjectured central configurations of the equal-mass nine-body problem for $A=3$, with Morse indices and multiplicities.}
\label{ninebodyccs}
\end{figure}

Our experience so far has also suggested another conjecture:

\begin{conj}
The number of equal-mass central configurations never decreases as the exponent $A$ increases.
\end{conj}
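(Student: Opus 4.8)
The statement is a global monotonicity assertion about the zero set of the gradient field $\nabla f_A$ on the reduced configuration space $\mathcal{C}_N$ as the exponent $A$ sweeps the interval $(2,\infty)$. The plan is to view the equal-mass central configurations as the fibers of the vanishing locus of the map $(q,A)\mapsto \nabla f_A(q)$ on $\mathcal{C}_N\times(2,\infty)$, and to control how this locus breaks up into smooth branches that can be created or destroyed only at the isolated values $A=A^*$ at which $f_{A^*}$ acquires a degenerate critical point. In the Morse-theoretic language of the paper, writing $M_A(t)=P(t)+(1+t)R_A(t)$, the total count is $M_A(1)=P(1)+2R_A(1)$ with $P(1)$ independent of $A$; the conjecture is therefore equivalent to the assertion that $R_A(1)$ is non-decreasing in $A$.

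First I would establish the compactness needed to make the count well defined. Because a scale has been fixed through the $\frac{M I}{2}$ term, central configurations cannot run off to infinity, and by Shub's argument the gradient of $f$ is outward pointing near the collision set $\Delta$; I would upgrade this to a bound uniform as $A\to\infty$, so that no branch of central configurations is ever lost into $\Delta$ or out to infinity. Because $f_A$ depends real-analytically on $A$, the degenerate values form a discrete set (barring a branch that is degenerate over an entire interval, a possibility one must also exclude), and on any $A$-interval free of degenerate critical points the implicit function theorem shows that the central configurations vary smoothly and their number is locally constant. The conjecture thus reduces to a purely local statement at each degenerate value $A^*$: the net change in the number of critical points as $A$ increases through $A^*$ is non-negative.

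Second, at a degenerate $A^*$ I would carry out a Lyapunov--Schmidt reduction along the kernel of the Hessian $D^2 f_{A^*}$, which replaces the local count of zeros of $\nabla f_A$ by the real solutions of a finite-dimensional reduced equation $g(s,A)=0$, with $s$ ranging over the null directions. In the generic codimension-one (fold) case $g$ is scalar, the two colliding configurations exist on the side of $A^*$ fixed by the sign of $\partial_A g\cdot\partial_s^2 g$, and the required conclusion is exactly that this product has the sign placing the pair at $A>A^*$. Here $\partial_A g$ is computable as the pairing of the null eigenvector against $\nabla(\partial_A f)$, where $\partial_A f=\partial_A\!\left(\frac{U}{A-2}\right)=-\frac{1}{A-2}\sum_{i<k} m_i m_k\, r_{i,k}^{2-A}\bigl(\log r_{i,k}+\tfrac{1}{A-2}\bigr)$. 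I would note separately that the symmetry-breaking bifurcations forced by the large isotropy groups of the symmetric configurations analyzed above are of pitchfork type, so that a single degeneration produces an entire orbit of new configurations at once; in every case computed above for $N=5$ and $N=6$ these appear as creation events as $A$ increases, and the same reduced sign analysis governs their direction.

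The hard part, and the reason this remains only a conjecture, is proving that the fold orientation is always the count-increasing one, uniformly in $N$ and across every bifurcation value. There is no evident monotone Lyapunov functional forcing the sign of $\partial_A g\cdot\partial_s^2 g$, and the formula for $\partial_A f$ couples the logarithmic weights $\log r_{i,k}$ to the bifurcating mode in a manner with no obvious definite sign. The most promising route I can see is to prove the sign condition in the organizing limit $A\to\infty$, where only nearest-neighbor interactions survive and the defining equations collapse to the combinatorial unit-edge systems illustrated for the six-body problem, and then to propagate the sign downward in $A$ by a continuity argument controlled by a uniform nondegeneracy estimate. A further obstacle that any complete proof must confront is the higher-codimension degeneracies, where several of the blocks $E_i$ vanish simultaneously (as nearly happens for the pentagon near $A_c$), since there the scalar fold analysis no longer applies.
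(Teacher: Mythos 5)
The paper offers no proof of this statement: it appears there only as a conjecture, stated after the numerical surveys for $N\le 9$ (``Our experience so far has also suggested another conjecture''), so there is no argument of the author's to compare yours against, and your text is candid that it is a program rather than a proof. The decisive step is indeed the one you flag as missing. After your (reasonable) reductions --- compactness of the critical set uniformly in $A$, discreteness of the degenerate parameter values, and Lyapunov--Schmidt reduction at each degenerate $A^*$ --- everything hinges on showing that every fold is oriented so that the pair of configurations is created, not annihilated, as $A$ increases, i.e.\ that $\partial_A g\cdot\partial_s^2 g$ always has the favorable sign. That sign condition is not a technical remainder: in the codimension-one case it \emph{is} the conjecture, and you supply no mechanism forcing it. Nothing in the variational structure of $f_A$ excludes a saddle-node that destroys a pair of central configurations as $A$ grows; in analogous one-parameter families (varying a mass ratio rather than the exponent) annihilations are known to occur, so the sign cannot come from local normal-form considerations and would have to come from some global monotone structure that neither you nor the paper identifies. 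Your fallback --- verify the sign in the $A\to\infty$ nearest-neighbor limit and ``propagate it downward by continuity'' --- is circular as stated, since the continuation step presupposes exactly the uniform control over degeneracies that the conjecture concerns.

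There are also secondary gaps in the reductions themselves. Real-analyticity of $f_A$ in $A$ at each fixed configuration does not by itself make the bifurcation set discrete: you need the critical set to stay in a compact region of $\mathcal{C}_N$ uniformly on compact $A$-intervals (a uniform-in-$A$ strengthening of Shub's estimate near $\Delta$, which you assert but do not prove), and you must actually exclude branches degenerate over a whole interval of $A$, which you mention only parenthetically; note the paper's own examples show degeneracies are not exotic here (the heptagon at $A=2$, the pentagon family near $A_c$, $\lambda_{(7,2,-)}=0$ at $A=4$). Moreover the rotational eigenvalue $\lambda_{(N,0,-)}=0$ is present for every $A$, so the entire analysis must live on the quotient $\mathcal{C}_N$, and your bookkeeping identity $M_A(1)=P(1)+2R_A(1)$ with $R_A(1)$ nondecreasing is well defined only at nondegenerate values of $A$, so it restates the conjecture rather than reformulating it usefully across bifurcations. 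In short, your framing --- reduce the global count to a local orientation question at isolated bifurcation values --- is a sensible way to organize an attack and is consistent with the creation-type bifurcations the paper reports for $N=5$ and $N=6$, but as a proof it has one central, acknowledged gap (the fold orientation) plus unproven compactness and discreteness hypotheses, and so it does not establish the statement; the statement remains open in the paper as well.
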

This conjecture may also be true for unequal mass central configurations, but we lack the intuition to be confident in stating this stronger form.

\section{Collinear central configurations}

Fortunately, results from the Newtonian case on the collinear central configurations can be easily extended to $A \in [2,\infty)$.  The following result is something of a folk theorem, I do not know of a reference that explicitly states it:

\begin{theorem} 
For any $A \in [2,\infty)$, for each ordering of $N$ positive masses on a line there is a unique central configuration, and its (planar) Morse index is $N-2$.
\end{theorem}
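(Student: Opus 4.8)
The plan is to prove two separate assertions: existence and uniqueness of a collinear central configuration for each ordering, and the computation of its planar Morse index as $N-2$. For the existence-and-uniqueness part, I would restrict $f$ to the collinear subspace, i.e. set all $\theta_i$ equal (say to $0$) so that the configuration lies on the $x$-axis with signed positions $x_0 < x_1 < \cdots < x_{N-1}$. On this subspace $f$ becomes a function of the $x_i$ alone, and the potential term $U/(A-2) = \sum_{i<k} m_i m_k |x_i - x_k|^{2-A}/(A-2)$ blows up to $+\infty$ as any two particles approach collision (since $A > 2$ and $2-A < 0$), while the inertia term $\tfrac{MI}{2}$ forces $f \to +\infty$ as the configuration spreads to infinity. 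Hence $f$ restricted to the open ordering chamber $\{x_0 < \cdots < x_{N-1}\}$ is a proper function bounded below, so it attains a minimum at an interior critical point; this gives existence. For uniqueness, the classical argument (Moulton's theorem in the Newtonian case) is to show that $f$ is strictly convex on this chamber. I would compute the Hessian of $f|_{\text{collinear}}$ in the $x_i$ variables and argue it is positive definite: the inertia term contributes $M \cdot (\text{diag of }m_i) $ after fixing the center of mass, and the second derivatives of each $|x_i-x_k|^{2-A}$ term, since $2-A<0$ gives a convex function of the gap, contribute a positive-semidefinite graph-Laplacian-type form. Strict convexity then forces the critical point to be unique.

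For the Morse index computation I would work in the full planar configuration space and separate the tangent space at the collinear critical point into the radial-in-line (longitudinal) directions and the transverse (out-of-line) directions. The longitudinal block is exactly the Hessian of $f|_{\text{collinear}}$ just shown to be positive definite, so it contributes $0$ to the index; after modding out by the overall scaling/rotation symmetry encoded in $\mathcal{C}_N$ this accounts for $N-1$ positive directions (or $N$ minus the constraints). The index must therefore come entirely from the transverse directions, and the heart of the matter is to show that the transverse Hessian block has exactly $N-2$ negative eigenvalues. I would compute this transverse block explicitly: perturbing particle $i$ by a small transverse displacement $y_i$, the second-order change in $f$ comes from the moment-of-inertia term (positive) and the potential term. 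For a collinear configuration the transverse second derivatives of $|q_i - q_k|^{2-A}$ produce a matrix whose entries involve $-(A-2) m_i m_k / p_{i,k}^{A}$-type couplings; I expect this to be a symmetric matrix that, combined with the inertia contribution, yields a definite count.

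The key to pinning the transverse index at $N-2$ is a Perron–Frobenius / sign-pattern argument on the transverse Hessian. After removing the two-dimensional kernel arising from the symmetries (a transverse translation and an infinitesimal rotation of the whole line both lie in the kernel of the full transverse operator), the remaining transverse operator on the quotient should be negative definite, giving $N-2$ negative eigenvalues and hence index $N-2$. I would make this precise by writing the transverse Hessian as $\tfrac{M}{M}\,(\text{inertia}) - (A{-}2)\,L$ where $L$ is a weighted Laplacian-type matrix with nonnegative off-diagonal entries, and then show that the dominant balance favors the potential term in all but the two symmetry directions; equivalently one shows the relevant matrix is a (negative) irreducible matrix with a controlled spectrum. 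The main obstacle will be establishing the exact eigenvalue count for the transverse block for general positive masses and general $A \in [2,\infty)$, rather than merely its signature heuristically: the Newtonian-case proofs (due to Conley, Pacella, and others) rely on specific monotonicity of the force law, and I would need to verify that replacing the exponent $3$ by a general $A>2$ preserves the crucial sign structure $\partial^2 (r^{2-A})/\partial(\text{transverse})^2 < 0$ along the line, which does hold since $2-A<0$ makes each pairwise transverse interaction destabilizing. Checking that these destabilizing contributions cannot be overcome by the inertia term except in exactly the two symmetry directions, uniformly in $A$, is the delicate step.
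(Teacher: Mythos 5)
Your existence-and-uniqueness half is sound and is essentially the paper's route: the paper proves uniqueness by generalizing the convexity argument of Moulton as presented in Moeckel's notes (and alternatively cites Ferrario's fixed-point proof), and your computation confirms why the generalization is painless. Indeed, restricting to a fixed ordering chamber, the longitudinal Hessian of $f$ is the quadratic form
$$P(\xi) \;=\; M\sum_i m_i \xi_i^2 \;+\; (A-1)\sum_{i<k} m_i m_k\, r_{ik}^{-A}\,(\xi_i-\xi_k)^2,$$
since $\tfrac{d^2}{dr^2}\bigl(r^{2-A}/(A-2)\bigr) = (A-1)r^{-A} > 0$; this is positive definite outright (no center-of-mass fixing is needed, as the inertia term of $f$ is already a positive definite diagonal form), and properness on the chamber gives existence, including at $A=2$ where the limit of $U/(A-2)$ is $-\sum m_i m_k \log r_{ik}$, which still blows up at collisions.

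The index half, however, has a genuine gap plus a concrete error. First the error: with this paper's $f$, transverse translation is \emph{not} in the kernel of the transverse block. Writing the transverse form as $Q(y) = M\sum_i m_i y_i^2 - \sum_{i<k} m_i m_k r_{ik}^{-A}(y_i-y_k)^2$, one has $Q(\mathbf{1}) = M^2 > 0$ because the moment of inertia breaks translation invariance; only the rotation direction $y = x$ lies in the kernel, via the central configuration equation itself, which says exactly that $x$ is an eigenvector with eigenvalue $M$ of the mass-weighted Laplacian $\hat{L}$ defined by $(\hat{L}y)_i = \sum_{k\neq i} m_k r_{ik}^{-A}(y_i - y_k)$. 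Were your two-dimensional kernel correct, the critical point would be degenerate on $\mathcal{C}_N$ (only rotations are quotiented) and the Morse bookkeeping would collapse. Second, and more seriously, the statement you actually need --- that $M$ is the \emph{smallest nonzero} eigenvalue of $\hat{L}$ and is \emph{simple}, so that the remaining $N-2$ eigenvalues strictly exceed $M$ and give $Q$ exactly $N-2$ negative directions --- is asserted ("dominant balance favors the potential term") but never proved, and it cannot follow from positivity of the weights alone: for a generic collinear arrangement $x$ is not even an eigenvector, and nothing pins where $M$ sits in the spectrum of a weighted Laplacian. The critical-point structure must be used, and this is precisely the content of Conley's argument (a cone-invariance argument for an auxiliary flow, Perron--Frobenius in spirit, so your instinct points the right way) that the paper invokes by reference to Pacella and Moeckel's notes, observing that the exponent $A=3$ plays no essential role there --- only the sign structure $r_{ik}^{-A} > 0$ and the relation $\hat{L}x = Mx$ enter, so it extends verbatim to all $A \in [2,\infty)$. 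As written, your proposal identifies this as "the delicate step" but does not supply it, so the index claim remains unproven.
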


\begin{proof}
The uniqueness of the collinear configurations for a given ordering can be proved as an easy generalization of the argument in \cite{LMS} (section 2.9 of that work), which shows that the function $f$ is convex on each connected component of the collinear configuration equivalence classes (an elegant proof improving on the original result of Moulton \cite{moulton_straight_1910}).  There is also a proof by Ferrario for homogeneous potentials \cite{ferrario2002central} using a fixed-point method.  The statement of the Morse index being $N-2$ can be proved by generalizing the creative argument of C. Conley presented in \cite{pacella} and \cite{LMS}, as the exponent being $A=3$ plays no essential role in that proof.  

\end{proof}

The idea of the proof of Conley, which uses an auxiliary dynamical system that converges to collinear configurations, may have inspired a paper of Buck \cite{buck1991collinear} on Newtonian collinear configurations, which would be interesting to generalize to $A \in [2, \infty)$.

\section{Future Directions}

In addition to the various conjectures given in this article we would like to highlight some more general goals.  

\begin{enumerate}

\item A similar analysis to the one given here for the regular polygon could be carried out for the $N+1$ problem of a regular polygon with a central mass.  If the problem is restricted to all equal masses (i.e. including the central mass), the central mass should become inconsequential for large $N$ and $A$. Much is also already known about nested and `twisted' regular polygon configurations \cite{elmabsout1991nouvelles, moeckel1995bifurcation, zhang2002nested, sekiguchi2004bifurcation, lei2006rosette, llibre2009triple, corbera2009existence, celli2011polygonal, yu2012twisted, BarrabasCors, wang2015note, zhao2015central} which would be another relatively easy extension.

\item Numerically complete an analysis of all bifurcations in the equal mass $N$-body problem as the potential exponent $A$ is varied in $[2,\infty)$ for $N \in \{6, \ldots, 10\}$ (and higher if possible).  

\item Find a consistent (within Morse theory) set of central configurations for the equal mass $7$-body problem.

\item Extend any of these results to non-equal masses; even a perturbative analysis near the equal mass case would be a significant advance.  It may also be relatively easy to extend to restricted problems (where some of the masses are infinitesimal compared to others), which already have a rich literature of results in the Newtonian case \cite{Lindlow22,schaub1929speziellen, holtom_permanent_1943, pedersen1944librationspunkte, pedersen1952stabilitatsuntersuchungen, Gannaway81, Arens, Xia91, MoeckelSmall97, cors2004central, leandro_central_2006, santos2007symmetry, scheeres1991linear, barros2014bifurcations, hampton_jensen_R}.

\item Derive equations, or a combinatorial/linear-algebraic framework, for central configurations in the limiting case of $A \rightarrow \infty$.  Compared to the Newtonian case (cf. \cite{buck1990clustering}) it should be much easier to characterize possible central configurations for all $N$.  We strongly believe that the development of such a framework is acheivable and will shed useful light on the problem for all $A \ge 2$.

\end{enumerate}

\section*{Conflicts of interest}

The author declares that he has no conflict of interest.

\bibliography{../CelMechEtc}{}

\providecommand{\bysame}{\leavevmode\hbox to3em{\hrulefill}\thinspace}
\providecommand{\MR}{\relax\ifhmode\unskip\space\fi MR }
\providecommand{\MRhref}[2]{%
  \href{http://www.ams.org/mathscinet-getitem?mr=#1}{#2}
}
\providecommand{\href}[2]{#2}
\begin{thebibliography}{100}

\bibitem{Albouy93I}
A.~Albouy, \emph{Integral manifolds of the {$N$}-body problem}, Invent. math.
  \textbf{114} (1993), 463--488.

\bibitem{albouy_1995}
\bysame, \emph{Symetrie des configurations centrales de quatre corps}, C. R.
  Acad. Sci. Paris Soc. I \textbf{320} (1995), no.~2, 217--220.

\bibitem{albouy_1996}
\bysame, \emph{The symmetric central configurations of four equal masses},
  Hamiltonian dynamics and celestial mechanics, Contemp. Math., vol. 198, Amer.
  Math. Soc, Providence, RI, 1996, pp.~131--135.

\bibitem{albouy_paper_2003}
\bysame, \emph{On a paper of {M}oeckel on central configurations}, Regular \&
  Chaotic Dynamics. International Scientific Journal \textbf{8} (2003), no.~2,
  133--142.

\bibitem{albouy2015open}
\bysame, \emph{Open problem 1: Are {P}almore{'}s “ignored estimates” on the
  number of planar central configurations correct?}, Qualitative theory of
  dynamical systems \textbf{14} (2015), no.~2, 403--406.

\bibitem{albouy2012some}
A.~Albouy, H.~E. Cabral, and A.~A. Santos, \emph{Some problems on the classical
  n-body problem}, Cel. Mech. Dyn. Astron. \textbf{113} (2012), no.~4,
  369--375.

\bibitem{albouy_probleme_1997}
A.~Albouy and A.~Chenciner, \emph{Le probl{\`e}me des n corps et les distances
  mutuelles}, Invent. math. \textbf{131} (1997), no.~1, 151--184.

\bibitem{albouy_euler_2007}
A.~Albouy and Y.~Fu, \emph{Euler configurations and quasi-polynomial systems},
  Regular and Chaotic Dynamics \textbf{12} (2007), 39--55.

\bibitem{albouy_symmetry_2008}
A.~Albouy, Y.~Fu, and S.~Sun, \emph{Symmetry of planar four-body convex central
  configurations}, Proceedings of the Royal Society A: Mathematical, Physical
  and Engineering Sciences \textbf{464} (2008), no.~2093, 1355--1365.

\bibitem{albouykaloshin}
A.~Albouy and V.~Kaloshin, \emph{Finiteness of central configurations of five
  bodies in the plane}, Annals of Math \textbf{176} (2012), no.~1, 535--588.

\bibitem{alvarez2013symmetric}
M.~Alvarez-Ram{\'\i}rez and J.~Llibre, \emph{The symmetric central
  configurations of the 4-body problem with masses m1= m2$\ne$ m3= m4}, Applied
  Mathematics and Computation \textbf{219} (2013), no.~11, 5996--6001.

\bibitem{andoyer1906main}
H.~Andoyer, \emph{Sur l'{\'e}quilibre relatif de n corps}, Bull. Astron. V.
  \textbf{23} (1906), 50--59.

\bibitem{aref1992grobli}
H.~Aref, N.~Rott, and H.~Thomann, \emph{Gr{\"o}bli's solution of the
  three-vortex problem}, Annual Review of Fluid Mechanics \textbf{24} (1992),
  no.~1, 1--21.

\bibitem{aref1998point}
H.~Aref and D.~L. Vainchtein, \emph{Point vortices exhibit asymmetric
  equilibria}, Nature \textbf{392} (1998), no.~6678, 769.

\bibitem{Arens}
R.~F. Arenstorff, \emph{Central configurations of four bodies with one inferior
  mass}, Cel. Mech. \textbf{28} (1982), 9--15.

\bibitem{arnold1969cohomology}
V.~I. Arnold, \emph{The cohomology ring of the colored braid group}, Vladimir
  I. Arnold-Collected Works, Springer, 1969, pp.~183--186.

\bibitem{BarrabasCors}
E.~Barrab\'{e}s and J.~M. Cors, \emph{On central configurations of twisted
  crowns}, arXiv:1612.07135.

\bibitem{barros2014bifurcations}
J.~F. Barros and E.~S.~G. Leandro, \emph{Bifurcations and enumeration of
  classes of relative equilibria in the planar restricted four-body problem},
  SIAM J. Math. Anal. \textbf{46} (2014), no.~2, 1185--1203.

\bibitem{barry10}
A.~M. Barry, G.~R. Hall, and C.~E. Wayne, \emph{Relative equilibria of the (1
  +n)-vortex problem}, J. Nonlinear Science \textbf{22} (2012), 63--83.

\bibitem{Baru14}
V.~L. Barutello, R.~D. Jadanza, and A.~Portaluri, \emph{Linear instability of
  relative equilibria for n-body problems in the plane}, J. of Diff. Eq.
  \textbf{257} (2014), no.~6, 1773--1813.

\bibitem{bott1988morse}
R.~Bott, \emph{Morse theory indomitable}, Publications Math{\'e}matiques de
  l'Institut des Hautes {\'E}tudes Scientifiques \textbf{68} (1988), no.~1,
  99--114.

\bibitem{brumberg_permanent_1957}
V.~A Brumberg, \emph{Permanent configurations in the problem of four bodies and
  their stability.}, 1957, pp.~57--79.

\bibitem{buck1990clustering}
G.~Buck, \emph{On clustering in central configurations}, Proc. Amer. Math. Soc.
  \textbf{108} (1990), no.~3, 801--810.

\bibitem{buck1991collinear}
\bysame, \emph{The collinear central configuration of n equal masses}, Cel.
  Mech. Dyn. Astron. \textbf{51} (1991), no.~4, 305--317.

\bibitem{cabral1973integral}
H.~E. Cabral, \emph{On the integral manifolds of the $n$-body problem}, Invent.
  math. \textbf{20} (1973), no.~1, 59--72.

\bibitem{CabralSchmidt}
H.~E. Cabral and D.~S. Schmidt, \emph{Stability of relative equilibria in the
  problem of $n+1$ vortices}, SIAM J. Math. Anal. \textbf{31} (1999), no.~2,
  231--250.

\bibitem{celli2011polygonal}
M.~Celli, E.~A. Lacomba, and E.~P{\'e}rez-Chavela, \emph{On polygonal relative
  equilibria in the n-vortex problem}, Journal of Mathematical Physics
  \textbf{52} (2011), no.~10, 103101.

\bibitem{chazy18}
J.~Chazy, \emph{Sur certaines trajectoires du probl{\`e}me des n corp}, Bull.
  Astronom. \textbf{35} (1918), 321--389.

\bibitem{chen2018strictly}
K.-C. Chen and J.-S. Hsiao, \emph{Strictly convex central configurations of the
  planar five-body problem}, Trans. of the AMS \textbf{370} (2018), no.~3,
  1907--1924.

\bibitem{corbera2018four}
M.~Corbera, J.~M. Cors, and G.~E. Roberts, \emph{A four-body convex central
  configuration with perpendicular diagonals is necessarily a kite},
  Qualitative Theory of Dynamical Systems (2018), 1--8.

\bibitem{corbera2009existence}
M.~Corbera, J.~Delgado, and J.~Llibre, \emph{On the existence of central
  configurations of p nested n-gons}, Qual. Theory of Dynamical Systems
  \textbf{8} (2009), no.~2, 255--265.

\bibitem{cornelio2017family}
J.~L. Cornelio, M.~{\'A}lvarez-Ram{\'\i}rez, and J.~M. Cors, \emph{A family of
  stacked central configurations in the planar five-body problem}, Celestial
  Mechanics and Dynamical Astronomy \textbf{129} (2017), no.~3, 321--328.

\bibitem{cors2014uniqueness}
J.~M. Cors, G.~R. Hall, and G.~E. Roberts, \emph{Uniqueness results for
  co-circular central configurations for power-law potentials}, Physica D:
  Nonlinear Phenomena \textbf{280} (2014), 44--47.

\bibitem{cors2004central}
J.~M. Cors, J.~Llibre, and M.~Oll{\'e}, \emph{Central configurations of the
  planar coorbital satellite problem}, Cel. Mech. and Dyn. Astron. \textbf{89}
  (2004), no.~4, 319.

\bibitem{cors2012four}
J.~M. Cors and G.~E. Roberts, \emph{Four-body co-circular central
  configurations}, Nonlinearity \textbf{25} (2012), no.~2, 343.

\bibitem{DGPS}
W.~Decker, G.-M. Greuel, G.~Pfister, and H.~Sch{\"o}nemann, \emph{Singular
  4-1-0 — a computer algebra system for polynomial computations}, 2016,
  http://www.singular.uni-kl.de.

\bibitem{deng2017four}
Yiyang Deng, Bingyu Li, and Shiqing Zhang, \emph{Four-body central
  configurations with adjacent equal masses}, Journal of Geometry and Physics
  \textbf{114} (2017), 329--335.

\bibitem{devaney1980triple}
R.~L. Devaney, \emph{Triple collision in the planar isosceles three body
  problem}, Invent. math. \textbf{60} (1980), no.~3, 249--267.

\bibitem{dziobek_ueber_1900}
O.~Dziobek, \emph{{\"U}ber einen merkw{\"u}rdigen {F}all des
  {V}ielk{\"o}rperproblem}, Astron. Nachr. \textbf{152} (1900), 33--46.

\bibitem{Easton75}
R.~Easton, \emph{Some topology of $n$-body problems}, J. Diff. Eq. \textbf{19}
  (1975), 258--269.

\bibitem{elbialy1990collision}
M.~S. ElBialy, \emph{Collision singularities in celestial mechanics}, SIAM J.
  Math. Anal. \textbf{21} (1990), no.~6, 1563--1593.

\bibitem{elmabsout1991nouvelles}
B.~Elmabsout, \emph{Nouvelles configurations d'{\'e}quilibre relatif dans le
  probl{\`e}me des n corps. {I}}, Comptes rendus de l'Acad{\'e}mie des
  sciences. S{\'e}rie 2, M{\'e}canique, Physique, Chimie, Sciences de
  l'univers, Sciences de la Terre \textbf{312} (1991), no.~5, 467--472.

\bibitem{erdi2016central}
B.~{\'E}rdi and Z.~Czirj{\'a}k, \emph{Central configurations of four bodies
  with an axis of symmetry}, Cel. Mech. Dyn. Astron. \textbf{125} (2016),
  no.~1, 33--70.

\bibitem{Euler_col}
L.~Euler, \emph{De motu rectilineo trium corporum se mutuo attrahentium}, Novi
  Comm. Acad. Sci. Imp. Petrop \textbf{11} (1767), 144--151.

\bibitem{faugere2012solving}
J.-C. Faug{\`e}re and J.~Svartz, \emph{Solving polynomial systems globally
  invariant under an action of the symmetric group and application to the
  equilibria of n vortices in the plane}, Proceedings of the 37th International
  Symposium on Symbolic and Algebraic Computation, ACM, 2012, pp.~170--178.

\bibitem{fernandes2017convex}
A.~C. Fernandes, J.~Llibre, and L.~F. Mello, \emph{Convex central
  configurations of the 4-body problem with two pairs of equal adjacent
  masses}, Archive for Rational Mechanics and Analysis \textbf{226} (2017),
  no.~1, 303--320.

\bibitem{ferrario2002central}
D.~L. Ferrario, \emph{Central configurations, symmetries and fixed points},
  arXiv preprint math/0204198 (2002), 1--47.

\bibitem{ferrario2017central}
\bysame, \emph{Central configurations and mutual differences}, SIGMA. Symmetry,
  Integrability and Geometry: Methods and Applications \textbf{13} (2017), 021.

\bibitem{Gannaway81}
J.~R. Gannaway, \emph{Determination of all central configurations in the planar
  four-body problem with one inferior mass}, Ph.D. thesis, Vanderbilt
  University, TN, 1981.

\bibitem{gascheau1843}
M.~Gascheau, \emph{Examen d'une classe d'{\'e}quations diff{\'e}rentielles et
  application {\`a} un cas particulier du probl'{\'e}me des trois corps},
  Compt. Rend. \textbf{16} (1843), 393--394.

\bibitem{gidea2010symmetric}
M.~Gidea and J.~Llibre, \emph{Symmetric planar central configurations of five
  bodies: Euler plus two}, Cel. Mech. and Dyn. Astron. \textbf{106} (2010),
  no.~1, 89.

\bibitem{little13}
J.~Hachmeister, J.~Little, J.~McGhee, R.~Pelayo, and S.~Sasarita,
  \emph{Continua of central configurations with a negative mass in the $n$-body
  problem}, Cel. Mech. Dyn. Astron. \textbf{115} (2013), 427--438.

\bibitem{hall88}
G.~R. Hall, \emph{Central configurations in the planar 1+$n$ body problem},
  preprint, Boston University, 1988.

\bibitem{hampton02}
M.~Hampton, \emph{Concave central configurations in the four-body problem},
  Ph.D. thesis, University of Washington, Seattle, WA, 2002.

\bibitem{hampton_co-circular_2005}
\bysame, \emph{Co-circular central configurations in the four-body problem},
  {EQUADIFF} 2003, World Sci. Publ., Hackensack, {NJ}, 2005, pp.~993--998.

\bibitem{hampton_stacked_2005}
\bysame, \emph{Stacked central configurations: new examples in the planar
  five-body problem}, Nonlinearity \textbf{18} (2005), no.~5, 2299--2304.

\bibitem{hampton_finiteness_2010}
\bysame, \emph{Finiteness of kite relative equilibria in the {Five-Vortex} and
  {Five-Body} problems}, Qualitative Theory of Dynamical Systems \textbf{8}
  (2010), no.~2, 349--356.

\bibitem{hampton_splendid}
\bysame, \emph{Splendid isolation: local uniqueness of the centered co-circular
  relative equilibria in the n-body problem}, Cel. Mech. Dyn. Astron.
  \textbf{124} (2016), 145--153.

\bibitem{hampton_finiteness_2011}
M.~Hampton and A.~N. Jensen, \emph{Finiteness of spatial central configurations
  in the five-body problem}, Cel. Mech. Dyn. Astron. \textbf{109} (2011),
  no.~4, 321--332.

\bibitem{hampton_jensen_R}
\bysame, \emph{Finiteness of relative equilibria in the planar generalized
  $n$-body problem with fixed subconfigurations}, J. Geom. Mech. \textbf{7}
  (2015), 35--42.

\bibitem{hampton_finiteness_2005}
M.~Hampton and R.~Moeckel, \emph{Finiteness of relative equilibria of the
  four-body problem}, Invent. math. \textbf{163} (2005), no.~2, 289--312.

\bibitem{hampton_finiteness_2009}
\bysame, \emph{Finiteness of stationary configurations of the four-vortex
  problem}, Trans. of the AMS \textbf{361} (2009), no.~3, 1317--1332.

\bibitem{hampton_vort_pairs2014}
M.~Hampton, G.~E. Roberts, and M.~Santoprete, \emph{Relative equilibria in the
  four-body problem with two pairs of equal vorticities}, J. Nonlinear Sci.
  \textbf{24} (2014), 39--92.

\bibitem{HelmholtzV}
H.~Helmholtz, \emph{Uber integrale der hydrodynamischen gleichungen, welche den
  wirbelbewegungen entsprechen}, Crelle's Journal f{\"u}r Mathematik
  \textbf{55} (1858), 25--55, English translation by P. G. Tait, P.G.,
  \textit{On the integrals of the hydrodynamical equations which express
  vortex-motion}, Philosophical Magazine, (1867) 485-51.

\bibitem{holtom_permanent_1943}
C.~Holtom, \emph{Permanent configurations in the {n-Body} problem}, Trans.
  Amer. Math. Soc. \textbf{54} (1943), no.~3, 520--543.

\bibitem{SunHu09}
X.~Hu and S.~Sun, \emph{Stability of relative equilibria and {M}orse index of
  central configurations}, C. R. Acad. Sci. Paris \textbf{347} (2009),
  1309--1312.

\bibitem{hulkower78}
N.~Hulkower, \emph{The zero energy three body problem}, Indiana University
  Mathematics Journal \textbf{27} (1978), 409--447.

\bibitem{Jersett2018}
N.~J. Jersett, \emph{Planar central configurations of the 4-body problem},
  Master's thesis, University of Minnesota Duluth, 2018.

\bibitem{kirchhoff_vorlesungen_1883}
G.~Kirchhoff, \emph{Vorlesungen über mathematische physik}, {B.G.} Teubner,
  1883.

\bibitem{lacomba2003singularities}
E.~A. Lacomba, \emph{Singularities and chaos in classical and celestial
  mechanics}, Developments in Mathematical and Experimental Physics, Springer,
  2003, pp.~89--97.

\bibitem{lacomba1988origin}
E.~A. Lacomba and L.~A. Ibort, \emph{Origin and infinity manifolds for
  mechanical systems with homogeneous potentials}, Acta Applicandae Mathematica
  \textbf{11} (1988), no.~3, 259--284.

\bibitem{Lagrange_3}
J.~L. Lagrange, \emph{Essai sur le probl\`eme des trois corps}, \OE uvres,
  vol.~6, Gauthier-Villars, Paris, 1772.

\bibitem{laura1905sulle}
E.~Laura, \emph{Sulle equazioni differenziali canoniche del moto di un sistema
  di vortici elementari rettilinei e paralleli in un fluido incomprensibile
  indefinito}, Atti della Reale Accad. Torino \textbf{40} (1905), 296--312.

\bibitem{leandro_finiteness_2003}
E.~S.~G. Leandro, \emph{Finiteness and bifurcations of some symmetrical classes
  of central configurations}, Arch. Rat. Mech. Anal. \textbf{167} (2003),
  no.~2, 147--177.

\bibitem{leandro_central_2006}
\bysame, \emph{On the central configurations of the planar restricted four-body
  problem}, J. Diff. Eq. \textbf{226} (2006), no.~1, 323--351.

\bibitem{LeeSantoprete2009}
T.~L. Lee and M.~Santoprete, \emph{Central configurations of the five-body
  problem with equal masses}, Cel. Mech. and Dynam. Astron. \textbf{104}
  (2009), 369--381.

\bibitem{lehmann1891ueber}
R.~Lehmann-Filh{\'e}s, \emph{Ueber zwei {F}{\"a}lle des
  {V}ielk{\"o}rperproblems.}, Astron. Nachr. \textbf{127} (1891), no.~9,
  137--143.

\bibitem{lei2006rosette}
J.~Lei and M.~Santoprete, \emph{Rosette central configurations, degenerate
  central configurations and bifurcations}, Cel. Mech. Dyn. Astron. \textbf{94}
  (2006), no.~3, 271--287.

\bibitem{Lindlow22}
M.~Lindlow, \emph{Ein {S}pezialfall des {V}ierkörperproblems}, Astron. Nachr.
  \textbf{216} (1922), 233--248.

\bibitem{llibre1990number}
J.~Llibre, \emph{On the number of central configurations in the n-body
  problem}, Cel. Mech. Dyn. Astron. \textbf{50} (1990), no.~1, 89--96.

\bibitem{llibre2008new}
J.~Llibre and L.~F. Mello, \emph{New central configurations for the planar
  5-body problem}, Cel. Mech. Dynam. Astron. \textbf{100} (2008), no.~2,
  141--149.

\bibitem{llibre2009triple}
\bysame, \emph{Triple and quadruple nested central configurations for the
  planar n-body problem}, Physica D: Nonlinear Phenomena \textbf{238} (2009),
  no.~5, 563--571.

\bibitem{llibre2011new}
J.~Llibre, L.~F. Mello, and E.~Perez-Chavela, \emph{New stacked central
  configurations for the planar 5-body problem}, Celestial Mechanics and
  Dynamical Astronomy \textbf{110} (2011), no.~1, 43.

\bibitem{long2002four}
Y.~Long and S.~Sun, \emph{Four-body central configurations with some equal
  masses}, Arch. Rat. Mech. Anal. \textbf{162} (2002), no.~1, 25--44.

\bibitem{Longley1907}
W.~R. Longley, \emph{Some particular solutions in the problem of n-bodies},
  Bull. Am. Math. Soc. \textbf{7} (1907), 324--335.

\bibitem{MacMillanBartky32}
W.~D. MacMillan and W.~Bartky, \emph{Permanent configurations in the problem of
  four bodies}, Trans. Amer. Math. Soc. \textbf{34} (1932), 838--875.

\bibitem{mccord1996planar}
C.~K. McCord, \emph{Planar central configuration estimates in the n-body
  problem}, Ergodic Theory and Dynamical Systems \textbf{16} (1996), no.~5,
  1059--1070.

\bibitem{mccord1998integral}
C.~K. McCord, K.~R. Meyer, and Q.~Wang, \emph{The integral manifolds of the
  three body problem}, vol. 628, American Mathematical Society, 1998.

\bibitem{mcgehee1974triple}
R.~McGehee, \emph{Triple collision in the collinear three-body problem},
  Invent. math. \textbf{27} (1974), no.~3, 191--227.

\bibitem{merkel2008morse}
J.~C. Merkel, \emph{Morse theory and central configurations in the spatial
  n-body problem}, J. of Dyn. and Diff. Eq. \textbf{20} (2008), no.~3,
  653--668.

\bibitem{meyer1988bifurcations}
K.~R. Meyer and D.~S. Schmidt, \emph{Bifurcations of relative equilibria in the
  {N}-body and {K}irchhoff problems}, SIAM J. Math. Anal. \textbf{19} (1988),
  no.~6, 1295--1313.

\bibitem{milnor2016morse}
J.~Milnor, \emph{Morse theory}, vol.~51, Princeton {U}niversity {p}ress, 2016.

\bibitem{moeckel1981orbits}
R.~Moeckel, \emph{Orbits of the three-body problem which pass infinitely close
  to triple collision}, American Journal of Mathematics \textbf{103} (1981),
  no.~6, 1323--1341.

\bibitem{moeckel1989chaotic}
\bysame, \emph{Chaotic dynamics near triple collision}, Arch. Rat. Mech. Anal.
  \textbf{107} (1989), no.~1, 37--69.

\bibitem{moeckel_central_1990}
\bysame, \emph{On central configurations}, Mathematische Zeitschrift
  \textbf{205} (1990), no.~1, 499--517.

\bibitem{moeckel_dom94}
\bysame, \emph{Linear stability of relative equilibria with a dominant mass},
  J. Dyn. and Diff. Eq. \textbf{6} (1994), 37--51.

\bibitem{MoeckelSmall97}
\bysame, \emph{Relative equilibria with clusters of small masses}, J. Dyn.
  Diff. Eq. \textbf{9} (1997), 507--533.

\bibitem{moeckel_generic_2001}
\bysame, \emph{Generic finiteness for {D}ziobek configurations}, Trans. Amer.
  Math. Soc. \textbf{353} (2001), no.~11, 4673--4686.

\bibitem{LMS}
\bysame, \emph{Central {C}onfigurations}, Central {C}onfigurations, Periodic
  Orbits, and Hamiltonian Systems, Springer, 2015.

\bibitem{moeckel1995bifurcation}
R.~Moeckel and C.~Sim{\'o}, \emph{Bifurcation of spatial central configurations
  from planar ones}, SIAM J. Math. Anal. \textbf{26} (1995), no.~4, 978--998.

\bibitem{Montaldi2015}
J.~Montaldi, \emph{Existence of symmetric central configurations}, Cel. Mech.
  Dyn. Astr. \textbf{122} (2015), 405--418.

\bibitem{moulton_straight_1910}
F.~R. Moulton, \emph{The straight line solutions of the problem of {N} bodies},
  The Annals of Mathematics \textbf{12} (1910), no.~1, 1--17.

\bibitem{Newt}
I.~Newton, \emph{Philosophi{\ae} naturalis principia mathematica}, Royal
  Society, London, 1687.

\bibitem{oneil_stationary_1987}
K.~A. {O'Neil}, \emph{Stationary configurations of point vortices}, Trans. of
  the AMS \textbf{302} (1987), no.~2, 383--425.

\bibitem{pacella}
F.~Pacella, \emph{Central configurations of the n-body problem via the
  equivariant {M}orse theory}, Arch. Rat. Mech. Anal. \textbf{97} (1987),
  59--74.

\bibitem{pedersen1944librationspunkte}
P.~Pedersen, \emph{Librationspunkte im restringierten vierkoerperproblem},
  Publikationer og mindre Meddeler fra Kobenhavns Observatorium \textbf{137}
  (1944), 1--80.

\bibitem{pedersen1952stabilitatsuntersuchungen}
\bysame, \emph{Stabilit{\"a}tsuntersuchungen im restringierten
  vierk{\"o}rperproblem}, Publikationer og mindre Meddeler fra Kobenhavns
  Observatorium \textbf{159} (1952), 1--38.

\bibitem{perez2007convex}
E.~Perez-Chavela and M.~Santoprete, \emph{Convex four-body central
  configurations with some equal masses}, Archive for rational mechanics and
  analysis \textbf{185} (2007), no.~3, 481--494.

\bibitem{PerkoWalter}
L.~M. Perko and E.~L. Walter, \emph{Regular polygon solutions of the n-body
  problem}, Proc. Amer. Math. Soc. \textbf{94} (1985), 301--309.

\bibitem{pina2010central}
E.~Pi{\~n}a and P.~Lonngi, \emph{Central configurations for the planar
  {N}ewtonian four-body problem}, Cel. Mech. Dyn. Astron. \textbf{108} (2010),
  no.~1, 73--93.

\bibitem{rayl39}
A.~S. Rayl, \emph{Stability of permanent configurations in the problem of four
  bodies}, Ph.D. thesis, University of Chicago, Chicago, IL, 1939.

\bibitem{roberts_continuum_1999}
G.~E. Roberts, \emph{A continuum of relative equilibria in the five-body
  problem}, Physica D: Nonlinear Phenomena \textbf{127} (1999), no.~3-4,
  141--145.

\bibitem{roberts_spectral_1999}
\bysame, \emph{Spectral instability of relative equilibria in the planar
  $n$-body problem}, Nonlinearity \textbf{12} (1999), no.~4, 757--769.

\bibitem{roberts_linear_2000}
\bysame, \emph{Linear stability in the $1+n$-gon relative equilibrium},
  Hamiltonian systems and celestial mechanics {(P{\'a}tzcuaro}, 1998), World
  Sci. Monogr. Ser. Math., vol.~6, World Sci. Publ., River Edge, {NJ}, 2000,
  pp.~303--330.

\bibitem{roberts02}
\bysame, \emph{Linear stability of the elliptic {L}agrangian triangle solutions
  in the three-body problem}, J. of Diff. Eq. \textbf{182} (2002), 191--218.

\bibitem{roberts13}
\bysame, \emph{Stability of relative equilibria in the planar $n$-vortex
  problem}, SIAM J. Applied Dynamical Systems \textbf{12} (2013), 1114--1134.

\bibitem{routh1875}
E.~J. Routh, \emph{On {L}aplace's three particles with a supplement on the
  stability of their motion}, Proc. London Math. Soc. \textbf{6} (1875),
  86--97.

\bibitem{rusu2016bifurcations}
D.~Rusu and M.~Santoprete, \emph{Bifurcations of central configurations in the
  four-body problem with some equal masses}, SIAM Journal on Applied Dynamical
  Systems \textbf{15} (2016), no.~1, 440--458.

\bibitem{saari80}
D.~G. Saari, \emph{On the role and properties of $n$-body central
  configurations}, Cel. Mech. \textbf{21} (1980), 9--20.

\bibitem{saari05}
\bysame, \emph{Collisions, rings, and other {N}ewtonian {$N$}-body problems},
  American Mathematical Society, 2005.

\bibitem{santoprete06}
M.~Santoprete, \emph{Linear stability of the {L}agrangian triangle solutions
  for quasihomogeneous potentials}, Cel. Mech. and Dyn. Astron. \textbf{94}
  (2006), 17--35.

\bibitem{santoprete2018four}
\bysame, \emph{Four-body central configurations with one pair of opposite sides
  parallel}, Journal of Mathematical Analysis and Applications \textbf{464}
  (2018), no.~1, 421--434.

\bibitem{santos2007symmetry}
A.~A. Santos and C.~Vidal, \emph{Symmetry of the restricted 4+ 1 body problem
  with equal masses}, Regular and Chaotic Dynamics \textbf{12} (2007), no.~1,
  27--38.

\bibitem{schaub1929speziellen}
W.~Schaub, \emph{{\"U}ber einen speziellen fall des
  f{\"u}nfk{\"o}rperproblems}, Astronomische Nachrichten \textbf{236} (1929),
  no.~3-4, 33--54.

\bibitem{scheeres1991linear}
D.~J. Scheeres and N.~X. Vinh, \emph{Linear stability of a self-gravitating
  ring}, Cel. Mech. Dyn. Astron. \textbf{51} (1991), no.~1, 83--103.

\bibitem{sekiguchi2004bifurcation}
M.~Sekiguchi, \emph{Bifurcation of central configuration in the 2n+ 1 body
  problem}, Cel. Mech. Dyn. Astron. \textbf{90} (2004), no.~3-4, 355--360.

\bibitem{Shary_2014}
S.~P. Shary, \emph{On full-rank interval matrices}, Numerical Analysis and
  Applications \textbf{7} (2014), 241--254.

\bibitem{shi2010classification}
J.~Shi and Z.~Xie, \emph{Classification of four-body central configurations
  with three equal masses}, Journal of Mathematical Analysis and Applications
  \textbf{363} (2010), no.~2, 512--524.

\bibitem{shub1971appendix}
M.~Shub, \emph{Appendix to {S}male's paper: Diagonals and relative equilibria},
  Manifolds—Amsterdam 1970, Springer, 1971, pp.~199--201.

\bibitem{simo_relative_1978}
C.~Simo, \emph{Relative equilibrium solutions in the four body problem}, Cel.
  Mech. Dyn. Astron. \textbf{18} (1978), no.~2, 165--184.

\bibitem{slaminka1990central}
E.~E. Slaminka and K.~D. Woerner, \emph{Central configurations and a theorem of
  {P}almore}, Cel. Mech. Dyn. Astron. \textbf{48} (1990), no.~4, 347--355.

\bibitem{smale_topology_1970}
S.~Smale, \emph{Topology and mechanics. i}, Invent. math. \textbf{10} (1970),
  no.~4, 305--331.

\bibitem{smale_topology_1970-1}
\bysame, \emph{Topology and mechanics. {II}}, Invent. math. \textbf{11} (1970),
  no.~1, 45--64.

\bibitem{smale98}
\bysame, \emph{Mathematical problems for the next century}, Mathematical
  Intelligencer \textbf{20} (1998), 7--15.

\bibitem{sweatman2014orbits}
W.~L. Sweatman, \emph{Orbits near central configurations for four equal
  masses}, Cel. Mech. Dyn. Astron. \textbf{119} (2014), no.~3-4, 379--395.

\bibitem{synge1932apsides}
J.~L. Synge, \emph{The apsides of general dynamical systems}, Trans. of the AMS
  \textbf{34} (1932), no.~3, 481--522.

\bibitem{sagemath}
{The Sage Developers}, \emph{{S}agemath, the {S}age {M}athematics {S}oftware
  {S}ystem ({V}ersion 8.3)}, 2018, {\tt http://www.sagemath.org}.

\bibitem{wang2015note}
Z.~Wang and F.~Li, \emph{A note on the two nested regular polygonal central
  configurations}, Proc. of the AMS \textbf{143} (2015), no.~11, 4817--4822.

\bibitem{williams_permanent_1938}
W.~L. Williams, \emph{Permanent configurations in the problem of five bodies},
  Trans. of the AMS \textbf{44} (1938), no.~3, 563--579.

\bibitem{wintner41}
A.~Wintner, \emph{The analytical foundations of celestial mechanics}, Princeton
  University Press, Princeton, NJ, 1941.

\bibitem{woerner1990n}
K.~D. Woerner, \emph{The {$N$}-gon is not a local minimum of {$U^2 I$} for
  {$N>7$}}, Cel. Mech. Dyn. Astron. \textbf{49} (1990), no.~4, 413--421.

\bibitem{Xia91}
Z.~Xia, \emph{Central configurations with many small masses}, J. Diff. Eq.
  \textbf{91} (1991), 168--179.

\bibitem{xie2012isosceles}
Z.~Xie, \emph{Isosceles trapezoid central configurations of the {N}ewtonian
  four-body problem}, Proceedings of the Royal Society of Edinburgh Section A:
  Mathematics \textbf{142} (2012), no.~3, 665--672.

\bibitem{yu2012twisted}
X.~Yu and S.~Zhang, \emph{Twisted angles for central configurations formed by
  two twisted regular polygons}, J. of Diff. Eq. \textbf{253} (2012), no.~7,
  2106--2122.

\bibitem{zhang2002nested}
S.~Zhang and Q.~Zhou, \emph{Nested regular polygon solutions for planar
  $2n$-body problems}, Science in China Series A: Mathematics \textbf{45}
  (2002), no.~8, 1053--1058.

\bibitem{zhao2015central}
F.~Zhao and J.~Chen, \emph{Central configurations for $(p n+ g n)$-body
  problems}, Cel. Mech. Dyn. Astron. \textbf{121} (2015), no.~1, 101--106.

\end{thebibliography}
\bibliographystyle{amsplain}

\end{document}